\documentclass[11pt]{article}
	
	%%%%%%%%%%%%%%%%%%%%%%%%%%%%%%%%%%%%%%%%%%%%%%%%%%%%%%%%%%%%%%%%%%%%%%
	%\pdfminorversion=4
	% NOTE: To produce blinded version, replace "0" with "1" below.
	\newcommand{\blind}{1}
	
	%%%%%%% IISE Transactions margin specifications %%%%%%%%%%%%%%%%%%%
	% DON'T change margins - should be 1 inch all around.
	\addtolength{\oddsidemargin}{-.5in}%
	\addtolength{\evensidemargin}{-.5in}%
	\addtolength{\textwidth}{1in}%
	\addtolength{\textheight}{1.3in}%
	\addtolength{\topmargin}{-.8in}%
    \makeatletter
    \renewcommand\section{\@startsection {section}{1}{\z@}%
    {-3.5ex \@plus -1ex \@minus -.2ex}%
    {2.3ex \@plus.2ex}%
    {\normalfont\fontfamily{phv}\fontsize{16}{19}\bfseries}}
    \renewcommand\subsection{\@startsection{subsection}{2}{\z@}%
    {-3.25ex\@plus -1ex \@minus -.2ex}%
    {1.5ex \@plus .2ex}%
    {\normalfont\fontfamily{phv}\fontsize{14}{17}\bfseries}}
    \renewcommand\subsubsection{\@startsection{subsubsection}{3}{\z@}%
    {-3.25ex\@plus -1ex \@minus -.2ex}%
    {1.5ex \@plus .2ex}%
    {\normalfont\normalsize\fontfamily{phv}\fontsize{14}{17}\selectfont}}
    \makeatother
    %%%%%%%%%%%%%%%%%%%%%%%%%%%%%%%%%%%%%%%%%%%%%%%%%%%%%%%%%%%%%%%%%%%%%%%%%
	
	%%%%% IISE Transactions package list %%%%%%%%%%%%%%%%%%%%%%%%%%%%%%%%%%%%%%
	\usepackage{amsmath}
	\usepackage{enumerate}
	\usepackage{xcolor}
    
	\usepackage{natbib} %comment out if you do not have the package
	\usepackage{url} % not crucial - just used below for the URL
 \usepackage{amsmath, amsthm, amssymb}
\usepackage[english]{babel}
\newtheorem{prop}{Proposition}

\usepackage[ruled,linesnumbered]{algorithm2e}
\usepackage{ragged2e}
\usepackage{xcolor}
\usepackage{setspace}
\allowdisplaybreaks
\usepackage{fix-cm}
\usepackage{threeparttable}
\usepackage{appendix}
\usepackage{tabularx}
\usepackage{graphicx}
\usepackage{float} 
\usepackage{tikz, tikz-3dplot}
\usepackage[skip=2pt,font=scriptsize]{caption}
%\usepackage{tikz-qtree}
%\usepackage{caption}

%\usepackage{subcaption}
%\usepackage{subcaption}
%\usetikzlibrary{positioning,calc}
%\usepackage{multirow}
\usepackage{abstract}
\usepackage{caption}
\usepackage{amsthm}
\usepackage{comment}
\usepackage{subcaption}
\usepackage{mathtools} 
\usepackage{comment}
\usepackage{makecell}
\usepackage{algorithm2e}
\usepackage{algpseudocode}
\usepackage{enumitem}

\RestyleAlgo{ruled}

\definecolor{links}{RGB}{204,36,29}
\usepackage[colorlinks=true,breaklinks=true,bookmarks=true,urlcolor=links,citecolor=links,linkcolor=links,bookmarksopen=false,draft=false]{hyperref}
	%%%%%%%%%%%%%%%%%%%%%%%%%%%%%%%%%%%%%%%%%%%%%%%%%%%%%%%%%%%%%%%%%%%%%%%
	
	%%%%% Author package list and commands %%%%%%%%%%%%%%%%%%%%%%%%%%%%%%%%%%%%%%%%%%%%%
	%%%%% Here are some examples %%%%%%%%%%%%%%
	%	\usepackage{amsfonts, amsthm, latexsym, amssymb}
	%	\usepackage{lineno}
	%	\newcommand{\mb}{\mathbf}
	%%%%%%%%%%%%%%%%%%%%%%%%%%%%%%%%%%%%%%%%%%%%%%%%%%%%%%%%%%%%%%%%%%%%%%%%%%%%%%
\usepackage[figuresright]{rotating}
\RequirePackage{pdflscape}% landscape environment
\usepackage{multirow, booktabs}
	\begin{document}
		
			%%%%%%%%%%%%%%%%%%%%%%%%%%%%%%%%%%%%%%%%%%%%%%%%%%%%%%%%%%%%%%%%%%%%%%%%%%%%%%
		\def\spacingset#1{\renewcommand{\baselinestretch}%
			{#1}\small\normalsize} \spacingset{1}
		%%%%%%%%%%%%%%%%%%%%%%%%%%%%%%%%%%%%%%%%%%%%%%%%%%%%%%%%%%%%%%%%%%%%%%%%%%%%%%
		
		\if0\blind
		{
			\title{Joint Bundle Design and Pricing for Extended Warranty Providers Servicing Multi-Tier Products}
            \author{Author information is purposely removed for double-blind review}
			
\bigskip
			\bigskip
			
			\medskip
			\date{}
			\maketitle
		} \fi
		
		\if1\blind
		{

            \title{Joint Bundle Design and Pricing for Extended Warranty Providers Servicing Multi-Tier Products}
			\author{Yajing Chen$^a$, 
            Yanrong Li$^b$, Xiao-Lin Wang$^c$, Zhi-Sheng Ye$^b$ \\
			$^a$ School of Management Science and Engineering, Dongbei University of Finance \\
            and Economics, Dalian 116025, China \\
            $^b$ Department of Industrial Systems Engineering and Management, \\
            National University of Singapore, Singapore \\
            $^c$ Business School, Sichuan University, Chengdu 610065, China\\
            }
			
\bigskip
			\bigskip
			\bigskip
			%\begin{center}
				%{\LARGE\bf \emph{IISE Transactions} \LaTeX \ Template}
			%\end{center}
            \maketitle
			\medskip
		} \fi
		\bigskip
		
\begin{abstract}
Extended warranties (EWs) constitute a significant source of revenue for capital-intensive products. Such products comprise multiple subsystems, enabling flexible EW design. For example, providers can bundle tailored sets of subsystems within different EW contracts, facilitating the creation of a service menu with differentiated warranty options.
From the perspective of a third-party EW provider servicing multi-tier products, we develop a novel model to jointly optimize bundle design and pricing for EW options in order to maximize the expected total profit.
Specifically, the problem involves determining which contracts\textemdash each containing a differentiated bundle of subsystems\textemdash to recommend for the multi-tier products and identifying the appropriate price for each contract. As the complexity of the joint optimization problem increases exponentially with the number of subsystems, we devise two solution approaches. The first approach leverages a mixed-integer second-order cone programming reformulation, which guarantees optimality but is applicable only for a small number of subsystems. 
The second approach utilizes an iterative two-step process, offering enhanced computational efficiency for scenarios involving a large number of subsystems. Numerical experiments validate the effectiveness of our model, particularly in scenarios characterized by high failure probabilities and a large number of subsystems.

\end{abstract}
			
\noindent%
{\it Keywords:} Extended warranty; Flexible warranty design; Bundle pricing; Second-order cone programming
	%\newpage
\spacingset{1.5} % DON'T change the spacing!

\section{Introduction} \label{s:intro}
An extended warranty (EW) provides prolonged protection against product failures beyond the manufacturer's base warranty. Under an EW contract, the provider promises to investigate, assess, and provide compensation if any failure covered in the contract occurs during the protection period \citep{murthy2014extended}. Unlike base warranty that is an integrated part of product sales and free of charge to customers, EW has to be bought separately with extra premiums, thus becoming an essential revenue source for providers. Recent years have witnessed substantial growth in the EW market. According to the report by \cite{imarc2025}, the global EW market was valued at \$147.1 billion in the year 2024 and is expected to reach \$239.0 billion by 2033.
In the EW market, capital-intensive products are particularly prominent due to their high values and extended lifespans. 
In practice, the base warranties of most capital-intensive products only cover a limited use time and/or usage (e.g., 3 years or 36,000 miles for automobiles), leaving the products unprotected after the base warranties expire. In view of this business opportunity, many providers start to offer EW contracts to satisfy customers' demand of post-warranty protection against product failures.

Unlike base warranties regulated by relevant policies, EWs offer greater flexibility in terms of design and pricing, allowing providers to offer more personalized contracts.
For instance, an EW is allowed to cover a partial set of subsystems in a product, whereas a base warranty generally covers all critical subsystems as required by most regulation policies. In this sense, providers can design multiple EW options by bundling differentiated sets of subsystems. 
As an example, Ford offers four EW options for its automobiles, including PowertrainCARE, BaseCARE, ExtraCARE, and PremiumCARE, which cover 29, 84, 113, and 1000+ components, respectively \citep{Ford}.
%In practice, bundling functional systems in EW design is still in an exploratory phase. 
In recent years, due to the high profitability of EWs, many third-party EW providers have emerged in the aftermarket to provide services for products made by different manufacturers and of different functionalities, price bands, and/or base-warranty statuses, among others. 
For example, Lizhen Company, a leading third-party automobile warranty service provider in China, prescribes the composition of subsystems in their EW contracts based on the power source and the status of base warranty \citep{Lizhen}.
Specifically, the company roughly categorizes automobiles into three series and offers differentiated EWs: (1) automobiles under base warranties (T1-T3 series) are offered EW coverage for fixed 16 or 17 major subsystems; (2) automobiles with expired base warranties (T4T5 series) are offered EW coverage for fixed 3 or 4 core subsystems and (3) new energy series are offered EW coverage only for essential electrical systems; see Figure~\ref{Fig:1-EW} for details. 
Nevertheless, this strategy provides the same bundle of subsystems for all automobiles in the same series, with the warranty price depending solely on the automobiles' price bands. 
Similarly, third-party EWs in other industries (e.g., construction equipment, home appliances) also fail to capture the diversity and heterogeneity of subsystems among different products.

%As a result, it fails to capture the diversity and heterogeneity of subsystems among different automobiles in EW design and pricing.

\begin{figure}[t]
    \centering
    \includegraphics[width=0.78\linewidth]{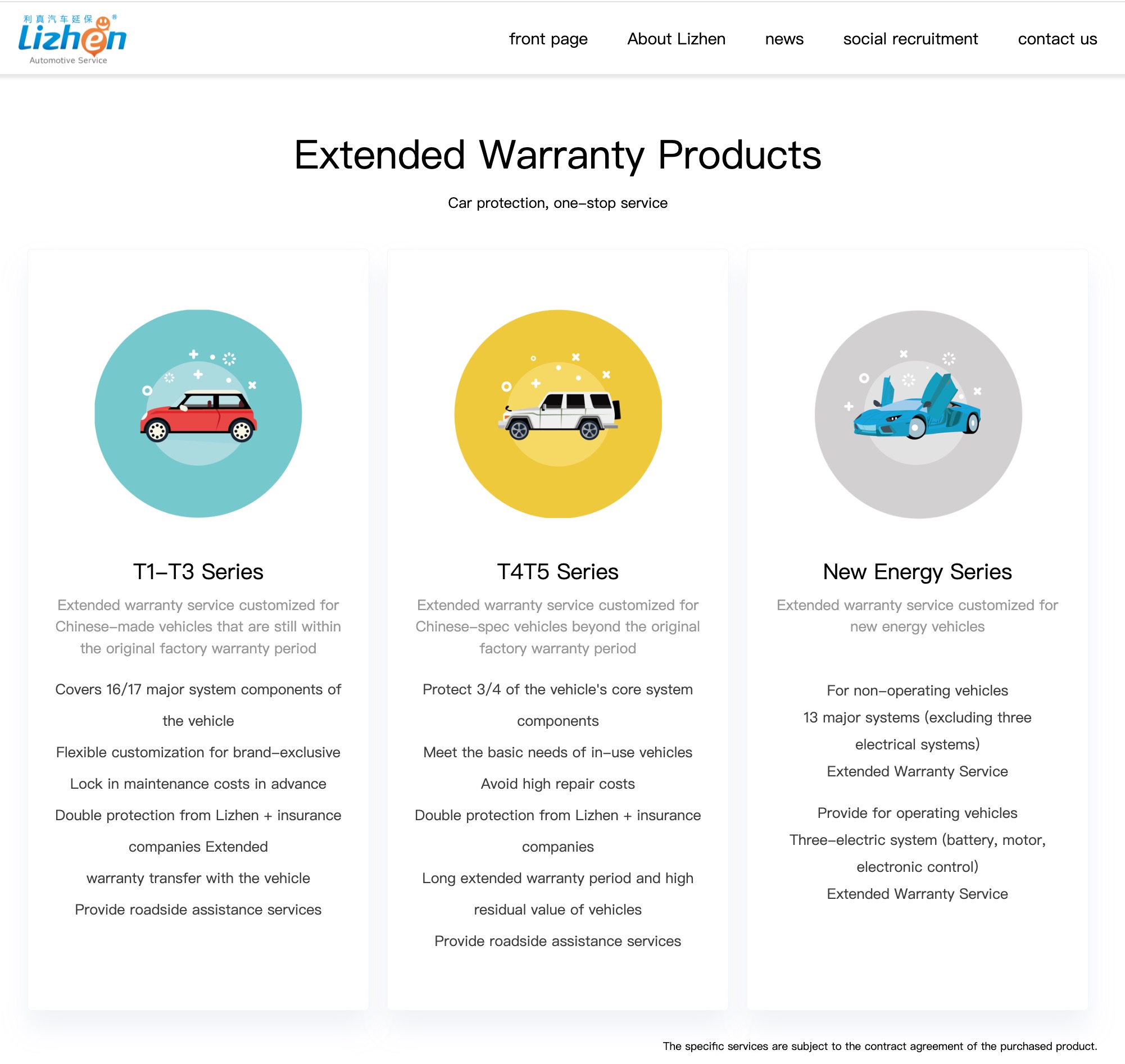}
    \caption{\centering Current EWs in the aftermarket offered by Lizhen Company}
    \label{Fig:1-EW}
\end{figure}

In the literature, extensive research efforts have been devoted to the design and pricing of flexible EW menus \citep{ye2013optimal, xie2017two, wang2020design, wang2023design, wang2025design}, 
particularly emphasizing how to specify protection periods for different warranty options and how to price them accordingly.
Although such research efforts have significantly contributed to maximizing profits or minimizing costs for EW providers, two main limitations still exist. 
First, they typically concentrate on a single product brand from the manufacturer’s perspective, which cannot be directly applied to a third-party EW provider. Due to the extensive range of product brands and models that third-party providers must service, designing personalized services for each product would substantially increase marketing and promotion costs. 
Second, the EW options in most studies differ only in their protection lengths and prices, but are homogeneous in terms of coverage\textemdash that is, the number of subsystems or services covered \citep[see, e.g.,][]{wang2021design, wang2023design,dai2024optimizing}. Such design does not fully reflect the flexibility of EWs, as it overlooks the difference among various subsystems.

In this paper, we develop a novel method to jointly optimize bundle design and pricing for EW contracts from a third-party EW provider's perspective.
We propose to categorize the products serviced by the EW provider into different groups based on their functionalities, power sources, base-warranty statuses (i.e., expired or not), and/or price bands (e.g., luxury, commercial, and economy), thereby simplifying the groups of products and reducing promotion costs. 
%On this basis, we design a menu of EW contracts consisting of different subsystems for each group of products.
On this basis, we design an EW menu that consists of multiple differentiated contracts, each tailored to a specific group of products.
The basic attraction model is adopted to describe customers' choice probabilities. Our main focus is on identifying which subsystems should be included in an EW contract, determining the most suitable contracts for each group of products, and specifying an optimal EW price when bundling different subsystems within each contract. In doing so, a bundle pricing approach is employed to determine optimal prices for EW contracts covering different sets of subsystems.

Unfortunately, when fully considering these factors, the joint optimization problem becomes intractable due to the following reasons. First, the potential combinations of subsystems in EW contracts grow exponentially as the number of subsystems increases. Even though the number of contracts in an EW menu is limited (usually 2 to 5), the specification of optimal compositions for the contracts is still not an easy task. Second, the warranty price is another critical decision variable that affects customers' preference weight and the provider's total profit; jointly optimizing the compositions and prices of EW contracts further increases the difficulty of problem-solving. Finally, the diversity of customers who own products in different groups leads to variations in their willingness to pay, adding another complication to the optimization process.

Our work contributes to the literature in the following respects:
(1) To the best of our knowledge, this work is the first attempt to jointly optimize the compositions and prices of EW contracts for multi-tier products, which is of particular interest to third-party EW providers. 
(2) We theoretically show that the joint optimization problem is NP-hard. 
As the complexity of this problem increases exponentially with the number of subsystems, we propose two solution methods: a mixed-integer second-order cone programming (MISOCP) reformulation method that ensures optimality but only works for small-scale problems, and an iterative two-step (ITS) method that can enhance computational efficiency for large-scale problems.
(3) We conduct extensive numerical experiments to verify the proposed two solution approaches. Additionally, we demonstrate the advantages of the joint optimization model over other benchmark models currently used in practice. The impacts of different model parameters on the optimal results are analyzed and the corresponding managerial insights are discussed.

The remainder of this article is organized as follows. Section \ref{Sec:Liter} reviews related literature. Section \ref{Sec:Model} presents the joint EW design and pricing model for multi-tier products from the third-party provider's perspective. Section \ref{Sec:solu} proposes two approaches to solving the joint optimization model. Section \ref{Sec:Numerial} presents numerical results for model comparisons and sensitivity analyses. 
The managerial insights are also discussed. Section \ref{Sec:con} summarizes this paper and points out some future research topics. The proofs and additional technical details can be found in Supplemental Online Materials.
%The corresponding management insights are also discussed. Section \ref{Sec:con} summarizes the work and points out some future research topics. All proofs and some technical details can be found in Appendix.

\section{Literature Review}\label{Sec:Liter}

Our work is related to two streams of literature: (i) design and pricing of EW contracts and (ii) bundle pricing.

Given the critical role of EW services in prolonging product lifecycle, strengthening customer retention, and creating sustained profits, extensive literature has focused on the design and optimization of EW strategies.
For example, \cite{su2012analysis} studied EW policies under different repair options (namely, minimal, imperfect, and complete) from the manufacturer's perspective. 
\cite{li2023warranty} designed an EW contract for deteriorating products with maintenance duration commitments aiming to maximize the manufacturer’s profits. 
%\cite{zheng2024joint} jointly optimized inspection and condition-based maintenance EW policy for a deteriorating product to minimize the expected warranty cost.
In addition, recent studies have begun to jointly optimize EW along with product and warranty sales strategies. 
\cite{qiao2024optimal} proposed an integrated model of warranty and post-warranty/EW maintenance from the consumers' perspective, encouraging consumers to consider warranty plans and EW maintenance strategies simultaneously when making a purchase decision. 
\cite{wang2024simultaneous} studied simultaneous and sequential retail strategies for a durable product and its EW, and examined the impact of the two retail strategies on the firm's optimal pricing decisions and profitability.

In particular, our work builds on the existing literature on the design and pricing of flexible EW contracts. \cite{hartman2009designing} proposed a flexible warranty menu for customers, evaluated their individual optimal choice using dynamic programming, and priced the EW service for each customer. \cite{liu2020complimentary} proposed a flexible post-purchase warranty choice of online registration for customers and optimized the warranty price with heterogeneous customer risk attitudes. \cite{wang2021design} designed a customized EW according to heterogeneous customer usage rates, in which a customer is allowed to choose the age limit of EW based on his/her willingness to pay. 
\cite{wang2020design,wang2025design}, \cite{wang2023design}, and \cite{dai2024optimizing} studied the design and pricing of various EW menus based on the multinomial logit model, where the menus consist of multiple EW options with differentiated features such as protection lengths and prices. 
However, existing EW design and pricing studies focus predominantly on specifying optimal protection lengths and/or prices for EW contracts, provided that the contracts have a homogeneous composition of subsystems. 
Our work distinguishes itself from the existing research by determining optimal compositions of subsystems and the corresponding EW prices, which reflect the priority concern of third-party EW providers.

%\textcolor{red}{better to distinguish between this work and those of \cite{wang2020design,wang2025design}, \cite{wang2023design}, \cite{dai2024optimizing} in a more explicit manner, because they are more related to the current work.}

Furthermore, our paper is also related to the bundle pricing literature. This stream of research can be traced back to the 1960s, when early studies demonstrated that bundling could significantly capture consumer surplus, reduce sales costs, and minimize consumer heterogeneity \citep{stigler1963united,adams1976commodity,schmalensee1984gaussian}.
\cite{guiltinan1987price} proposed a bundle pricing strategy of two or more products/services at a special discount, which extended the economic theory of bundling to encompass various types of complementary relationships.
\cite{hanson1990optimal} considered multiple components and obtained optimal bundling prices by mixed integer programming.
\cite{bitran2007pricing} addressed a problem of determining optimal composition and price for high-tech manufacturers by separating the two decisions: optimizing price through derivative analysis and optimizing composition based on a revenue-by-order criterion.
%Thereafter, bundle pricing strategies have been widely investigated for complementary products \citep{giri2020bundle,taleizadeh2020bundle} and competitive products \citep{lin2020pricing}.
Recent research also considered other important factors such as customer behaviors. For example, \cite{tulabandhula2023multi} proposed a bundle multivariate logit model to characterize multi-purchase behaviors and optimized recommendations for online retailers and e-commerce platforms. 
%\cite{zhang2023optimizing} examined the impact of customers' anticipated regret on mixed bundling strategy in advance selling. 
\cite{nie2024sales} discussed the advantages of bundle and add-on sales strategies using game theory, considering customer returns in e-commerce. 
\cite{sun2025partition} studied bundling and pricing strategies when the new offering interacts with and potentially cannibalizes existing product offerings.
%In addition to traditional applications in manufacturing and retail industries, bundle pricing also found its application potential in healthcare management. \cite{proano2012making} focused on how to combine and price different types of vaccines to satisfy the demand of low-income countries.
However, the bundle pricing strategy has not yet been applied to EWs for multi-tier products serviced by third-party EW providers. We contribute to the literature by personalizing the compositions\textemdash in terms of subsystems covered\textemdash of EW contracts for multi-tier products, and implementing optimized bundle pricing strategies to maximize the provider's expected total profit.

\section{Joint Optimization Model}\label{Sec:Model}
In this section, an optimization model is developed for joint bundle design and pricing of EW contracts for multi-tier products from the perspective of a third-party warranty provider. Section~\ref{Subsec:setting} first describes the problem setting. Section~\ref{Subsec:Joint} then presents the joint optimization model. 
The notations used throughout this paper are summarized in Table~\ref{notation}.
Sets, vectors, and matrices are denoted by uppercase calligraphy letters, lowercase bold letters, and uppercase bold letters, such as $\mathcal{Y}$, $\mathbf{y}$, and $\mathbf{Y}$, respectively.

\begin{table}[ht!]
	\centering
	\fontsize{8}{10}\selectfont
	\caption{Notations}
	\label{notation}
	{\renewcommand{\arraystretch}{1.5} 
		\begin{tabular}{ll}
			\hline 
			\multicolumn{2}{l}{\textbf{Sets}}  \\
			\hline$\mathcal{M}$ & Set of groups of products \\
			$\mathcal{W}$ & Set of subsystems in the products \\
			$\mathcal{N}$ & Set of EW contracts, with $n=2^{w}-1$ \\
			$\mathcal{L}$ & Set of discount ladders\\
			\hline 
			\multicolumn{2}{l}{\textbf{Indices}}  \\
			\hline $i\in \mathcal{N}$ & Index of EW contract   \\
			$j\in \mathcal{M}$ & Index of product groups \\
			$k\in \mathcal{W}$ & Index of subsystems \\
			$h\in \mathcal{L}$ & Index of discount levels\\
			\hline 
			\multicolumn{2}{l}{\textbf{Parameters}}  \\
			\hline 
			$d_h$ & Discount level of the $h$th ladder \\
			$v_{kj}$ & Attraction value of subsystem $k$ for group-$j$ customers \\
			$u_{0j}$ & Preference weight of the outside option for group-$j$ customers \\
			$P_{kj}^0$ & Initial EW price for subsystem $k$ in products belonging to group-$j$ customers \\
			$\beta_{j}$ & Price sensitivity of group-$j$ customers \\
			$\mathbb{I}_{i k}$ & Binary indicator indicating whether contract $i$ includes subsystem $k$ \\
			$\lambda_j$ & The proportion of group-$j$ customers\\
			$F_{kj}$ & Failure probability of subsystem $k$ in products belonging to group-$j$ customers\\
			$c_{kj}$ & Failure cost of subsystem $k$ in products belonging to group-$j$ customers \\
			$\theta$ & Advertising cost of an EW \\
			% \hline \textbf{Decision variables} & \\
			% \hline  $z^{j}_{ih}$ &:Binary. $1$ if the discount of product $i$ for type $j$ is $d_{h}$;0,otherwise,$\forall j \in J$\\
			% $x_{ij}$ &:Binary. $1$ if product $i$ recommended to type $j$ ;0,otherwise,$\forall j \in J$,$\forall i \in I$\\
			% $y_{i}$ &:Binary. $1$ if product $i$ is in the product list;0,otherwise,$\forall i \in I$\\
			\hline 
			\multicolumn{2}{l}{\textbf{Decision variables}}  \\
			\hline $p_{ij}$ & Selling price of EW contract $i$ for group-$j$ customers \\
			$x_{ij}$ & Binary variable indicating whether EW contract $i$ is offered to group-$j$ customers  \\
			$y_{i}$ & Binary variable indicating whether EW contract $i$ is in the advertising list \\
			$z_{ih}$ & Binary variable indicating whether the discount of contract $i$ is $d_h$\\
			\hline   
	\end{tabular}}
\end{table}

\subsection{Problem Setting}\label{Subsec:setting}
Consider a third-party EW provider aiming to accommodate multiple brands of capital-intensive products in the aftermarket. To simplify management, it is necessary to implement a systematic classification of these brands prior to EW design. Specifically, products are categorized into $m$ groups, indexed by $j \in \mathcal{M} := \{1, 2, \dots, m\}$.
A customer who owns a product in the $j$th group is called a group-$j$ customer.
The proportion of group-$j$ customers is denoted by $\lambda_j$, with $\sum_{j\in \mathcal{M}}\lambda_j=1$.
Suppose that there are $w$ subsystems (e.g., engine systems and suspension systems for automobiles), indexed by $k\in \mathcal{W}:=\{1,2,\dots,w\}$, that are critical to the functioning of the products. 
%The first decision facing the EW provider is to identify which functional systems should be included in each EW contract. 
In total, there are at most $n=2^{w}-1$ combinations of subsystems, each composing a potential EW contract, which are indexed by $i\in \mathcal{N}:=\{1,2,\dots,n\}$.
We use the binary indicator $\mathbb{I}_{ik}$ to indicate whether subsystem $k$ is included in EW contract $i$. That is, if subsystem $k$ is present in contract $i$, then $\mathbb{I}_{ik}=1$; otherwise, $\mathbb{I}_{ik}=0$. 
The provider must decide on an appropriate EW contract for each customer group. To this end, we use the binary variable $x_{ij}$ to indicate whether contract $i$ should be recommended to group-$j$ customers. That is, if contract $i$ is recommended to group-$j$ customers, then $x_{ij}=1$; otherwise, $x_{ij}=0$.

In this work, we consider two cost elements that are incurred to the provider when managing the EW business: warranty cost for servicing customer claims and advertising cost for promoting EWs. 
We adopt the replacement policy to handle warranty claims \citep{chien2015analysis,jin2024allocating,liu2023generalized}; that is, the EW provider is obligated to replace the failed systems with new identical ones free of charge to customers.
Let $F_{kj}$ represent the average failure probability of subsystem $k$ in group-$j$ products over the EW period, and $c_{kj}$ the associated average replacement cost.
Moreover, since the provider can consolidate the advertising activity for contracts designed for different groups of products as long as the contracts cover the same subsystem combination, we use the binary variable $y_i$ to denote whether contract $i$ should be advertised (namely, included in the displayed menu, as shown in Figure~\ref{Fig:1-EW}). 
That is, as long as contract $i$ is recommended for any product in group $j$ (i.e., $\exists$ $j \in \mathcal{M}$ \(|\) $x_{ij}=1$), $y_i$ should be set to 1, and the marginal advertising cost $\theta$ should be incurred to the EW provider.
%\textcolor{red}{still not easy to understand}

In our context, an EW contract usually covers multiple subsystems. As a result, the warranty price should depend on the covered subsystems and the price brand of the products.
Let $p_{ij}$ denote the price of EW contract $i$ for products in group $j$. The pricing mechanism is as follows. 
For each group $j$, an initial price $P_{kj}^0$ is predefined for each subsystem $k$, which is proportional to the corresponding failure cost $c_{kj}$. Then, we adopt the pricing strategy proposed by \cite{guiltinan1987price}, which is a normative framework for optimizing the different mixed-bundling discounts for the different subsystems included. Compared with an equivalent price promotion strategy, the bundle discount-based strategy shows advantages in attracting consumers \citep{janiszewski2004influence} and enhancing their loyalty in a competitive market \citep{balachander2010bundle}. 
In line with the discrete price ladder practice \citep{cohen2021promotion}, the bundle discount is usually restricted to a finite set of admissible values. Suppose that there are $l$ admissible discount levels, denoted by $\{d_h\}_{h \in \mathcal{L}}$, $\mathcal{L}:=\{1,2,\dots, l\}$. Without loss of generality, the discounts are indexed such that
\[
d_1>d_2>\dots>d_l,
\]
where a smaller value of $d_h$ ($h \in \mathcal{L}$) indicates a larger discount.
We use the binary variable $z_{ih}$ to indicate whether discount level $d_h$ should be assigned to contract $i$. If discount $d_h$ is selected for contract $i$, then $z_{ih}=1$; otherwise, $z_{ih}=0$. 
In this manner, the final price of EW contract $i$ advertised to group-$j$ customers is expressed as
\begin{equation}\label{Equ:EWprice}
    p_{ij}=\left(\sum_{k\in \mathcal{W}}\mathbb{I}_{ik}P_{kj}^0 \right)\left(\sum_{h\in \mathcal{L}}z_{ih}d_{h} \right), 
\end{equation}
with $\sum_{h \in \mathcal{L}} z_{i h}=1$, implying that only one discount level is assigned to each contract $i$.

Suppose that customers are rational in the sense that they make the purchase choices to maximize their utilities. 
Let $u_{ij}$ denote the preference weight of EW contract $i$ for group-$j$ customers, which is a function of the average valuation and price of the contract, and thus expressed as $u_{ij}(p_{ij})$.
Further let $u_{0j}$ denote the preference weight of the outside option for group-$j$ customers. 
According to the demand setting in \cite{luce1959individual} and \cite{hu2025constant}, we employ the basic attraction model to describe customers' choice among available EW contracts. Under this model, the probability that a group-$j$ customer chooses contract $i$ is given by
\begin{equation} \label{choice prob_0}
  q_{i j}\left(\mathbf{u}_{j}, \mathbf{x}_{j}\right)
  = \frac{u_{ij}(p_{ij})}{u_{0 j} + \sum_{i' \in \mathcal{N}} [u_{i'j}(p_{i'j})\cdot x_{i' j}]} x_{i j},
\end{equation}
where $\mathbf{x}_j = \{x_{ij}\}_{i \in \mathcal{N}}$ and $\mathbf{u}_j = \{u_{ij}(p_{ij})\}_{i \in \mathcal{N}}$.

For notational convenience, we further express the decision variables in vector or matrix form:
$\mathbf{p}_j = \{p_{ij}\}_{i \in \mathcal{N}}$,
$\mathbf{P}=\{\mathbf{p}_j\}_{j\in\mathcal{M}}$,
$\mathbf{X}=\{\mathbf{x}_j\}_{j\in\mathcal{M}}$, 
$\mathbf{y} = \{y_{i}\}_{i \in \mathcal{N}}$, 
$\mathbf{Z} = \{z_{ih}\}_{i \in \mathcal{N}, h \in \mathcal{L}}$.

\subsection{Joint Bundle Design and Pricing Model}\label{Subsec:Joint}
Based on the setting described previously, we develop a mixed integer programming model to seek optimal compositions and prices of EWs in order to maximize the expected total profit from a third-party EW provider's perspective. 

Once a group-$j$ customer chooses EW contract $i$, the marginal profit for the EW provider can be calculated as the difference between the sales revenue and the warranty cost (i.e., $p_{ij}-\sum_{k \in \mathcal{W}} \mathbb{I}_{i k} F_{k j} c_{kj}$). Based on the marginal profit and the choice probability of each EW contract, the expected total profit for the EW provider can be evaluated by Equation (\ref{mainObject}) below. Then, the joint EW design and pricing problem (\texttt{JDPEW}) is defined as
\begin{align}
  \max \,\ & \sum_{j \in \mathcal{M}} \lambda_j \sum_{i \in \mathcal{N}} q_{i j}\left(\mathbf{u}_{j}, \mathbf{x}_{j}\right) \left[p_{ij}-\sum_{k \in \mathcal{W}} \mathbb{I}_{i k} F_{k j} c_{kj}\right]-\sum_{i \in \mathcal{N}} \theta y_i \label{mainObject}\\ 
  \text{s.t.} \,\ 	 
  &x_{i j} \leq y_i  && \forall i \in \mathcal{N},~j \in \mathcal{M} \label{constraint1}\\
  &\sum_{i \in \mathcal{N}} \mathbb{I}_{i k} x_{i j} \geq 1 \quad && \forall k \in \mathcal{W},~j \in \mathcal{M}\label{constraint2}\\
  &\sum_{h \in \mathcal{L}} z_{i h}=1 &&  \forall i \in  \mathcal{N}\label{constraint3} \\
  &\left(\sum_{h \in \mathcal{L}} z_{i h} d_h-\sum_{h^{\prime} \in \mathcal{L}} z_{i^{\prime} h^{\prime}} d_{h^{\prime}}\right)\left(\sum_{k \in \mathcal{W}} \mathbb{I}_{i k}-\sum_{k \in \mathcal{W}} \mathbb{I}_{i^{\prime} k}\right) \leq 0 &&  \forall i,~i' \in \mathcal{N}\label{constraint4} \\
  &p_{ij}=\left(\sum_{k\in \mathcal{W}}\mathbb{I}_{ik}P_{kj}^0\right)\left(\sum_{h\in \mathcal{L}}z_{ih}d_{h}\right)&& \forall i \in \mathcal{N},~j \in \mathcal{M} \label{constraint5}\\
  &y_i \in\{0,1\},\, x_{i j} \in\{0,1\},\, z_{i h} \in\{0,1\},\, p_{ij}\geq 0 && \forall i \in \mathcal{N},~j \in \mathcal{M},~h \in \mathcal{L}.\label{constraint6}
\end{align}
The objective is to maximize the expected total profit in Equation~\eqref{mainObject}, calculated as the expected revenues minus the servicing and advertising costs for all the EWs. 
Constraint~\eqref{constraint1} ensures that the recommended EW contract for each group of products must be advertised to the associated customers. 
Constraint~\eqref{constraint2} states that for each group of products, every subsystem must appear at least once in the final recommended EWs, because the concerned subsystems are critical to the functionality of the products. In this manner, we guarantee that any subsystem of each group of products can be included in the EW contract to accommodate customers' specific preferences for particular subsystems.
Constraint~\eqref{constraint3} ensures that each EW contract has only one discount level.
Constraint~\eqref{constraint4} indicates that the EW provider offers a larger discount to customers who purchase EWs involving a larger number of subsystems, so as to stimulate demand. In other words, the more subsystems an EW contract includes, the larger discount it has. Constraint~\eqref{constraint5} represents the bundle pricing mechanism.

As discussed earlier, the provider can jointly promote multiple EW contracts as long as they cover identical subsystems across multiple groups of products so as to reduce advertising costs. The resultant advertising cost $\sum_{i\in\mathcal{N}} \theta y_i$ in Equation~\eqref{mainObject} cannot be decomposed by product group $j$. As a result, the optimization problem (\texttt{JDPEW}) cannot be decomposed into several smaller-scale subproblems, each focusing on the EW design and pricing for only one group of products.
In essence, problem (\texttt{JDPEW}) is a nonlinear program with a large number of decision variables and constraints, which poses significant challenges for problem solving. The following proposition reveals the hardness of this problem. 

\begin{prop}[Hardness]\label{Prop1}
 Problem (\texttt{JDPEW}) is NP-hard.  
\end{prop}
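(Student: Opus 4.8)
The plan is to establish NP-hardness by a polynomial-time reduction from a known NP-hard problem, exhibiting \texttt{JDPEW} as at least as hard. The natural candidate is a reduction that isolates the combinatorial core of the problem — deciding which subsystem combinations (contracts $i \in \mathcal{N}$) to advertise and recommend — while neutralizing the pricing and choice-probability machinery so that the residual decision is purely a set-selection problem. Good source problems to target are \textsc{Set Cover} (or its decision version) or \textsc{Maximum Coverage}, both of which resonate with Constraint~\eqref{constraint2}: every subsystem $k$ must be covered by at least one recommended contract for each group $j$, while each advertised contract carries a fixed marginal cost $\theta$ via the term $\sum_{i \in \mathcal{N}} \theta y_i$ in the objective. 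This tension — cover all subsystems using as few distinct advertised contracts as possible — is exactly a set-cover-type trade-off.

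Concretely, I would first specialize the instance: take a single product group ($m = 1$), a single trivial discount level ($l = 1$ with $d_1 = 1$, so Constraints~\eqref{constraint3}--\eqref{constraint5} become vacuous or pin down $p_{ij}$ deterministically), and choose the initial prices $P_{kj}^0$, failure probabilities $F_{kj}$, failure costs $c_{kj}$, and outside-option weight $u_{0j}$ so that the per-contract marginal profit $p_{ij} - \sum_{k} \mathbb{I}_{ik} F_{kj} c_{kj}$ and the induced preference weights $u_{ij}(p_{ij})$ take controlled values. The cleanest route is to make the revenue term constant or negligible relative to $\theta$, so that the objective is dominated by $-\theta \sum_i y_i$; then maximizing profit subject to Constraint~\eqref{constraint2} (coverage) and Constraint~\eqref{constraint1} ($x_{ij} \le y_i$) reduces to minimizing the number of advertised contracts that together cover all $w$ subsystems. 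Given an arbitrary \textsc{Set Cover} instance with universe of size $w$ and a family of subsets, I would create one subsystem per universe element and one potential contract $i$ per available subset (these are among the $2^w - 1$ contracts; the reduction only references polynomially many of them, and the remaining contracts can be made unattractive by assigning them prohibitively low preference weight or by noting they are never forced), so that a feasible advertising set of size $t$ corresponds exactly to a set cover of size $t$.

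The key steps, in order, would be: (1) fix the specialized parameter regime above and verify that in this regime problem \texttt{JDPEW} is well-defined and its optimum is governed by the coverage-versus-advertising-count trade-off; (2) given a \textsc{Set Cover} instance, construct the corresponding \texttt{JDPEW} instance in polynomial time, being careful that the contract index set $\mathcal{N}$ is exponential in $w$ but the construction and the optimal solution both involve only the polynomially many contracts coming from the set family; (3) prove the two-way correspondence — a set cover of size $\le t$ yields a feasible solution with objective $\ge -\theta t + (\text{const})$, and conversely any feasible solution with that objective value induces a set cover of size $\le t$ — which establishes that solving \texttt{JDPEW} optimally solves \textsc{Set Cover}; (4) conclude NP-hardness.

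I expect the main obstacle to be step (2)'s subtlety about the exponential index set $\mathcal{N} = \{1,\dots,2^w-1\}$: one must argue that, although the problem formally ranges over exponentially many contracts, the reduction is legitimate because we can either (a) restrict attention to the sub-instance where only the chosen polynomial family of contracts has nonzero attraction value (forcing $x_{ij} = 0$ and $y_i = 0$ for all other $i$ at optimum, e.g.\ by setting their preference weights to zero so they contribute nothing and incur no benefit to advertise), or (b) phrase the NP-hardness with respect to a natural succinct encoding of the instance. A secondary technical point is checking that the attraction-model choice probabilities~\eqref{choice prob_0} do not distort the argument — this is handled by the single-group, constant-margin specialization, where the objective's dependence on the $u_{ij}$ collapses and only the advertising-cost term and the feasibility constraints remain active. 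Once these encoding and degeneracy issues are dispatched, the correspondence in step (3) is routine.
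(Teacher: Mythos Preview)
Your proposal takes a different route from the paper, which reduces from \textsc{Partition} with two customer groups and exploits the fractional structure of the attraction-model objective rather than the coverage constraint. Your idea of neutralizing the revenue term and reducing to \textsc{Set Cover} via Constraint~\eqref{constraint2} is natural, but it has a genuine gap.

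The difficulty is that in \texttt{JDPEW} the contract set $\mathcal{N}$ contains \emph{all} $2^w-1$ subsystem combinations, in particular the full bundle $i^\star$ with $\mathbb{I}_{i^\star k}=1$ for every $k$. In your specialized regime (single group, revenue negligible, objective dominated by $-\theta\sum_i y_i$), the solution $x_{i^\star 1}=y_{i^\star}=1$ with everything else zero is always feasible: it satisfies Constraint~\eqref{constraint2} for every $k$, costs exactly $\theta$, and contributes zero revenue regardless of its preference weight (if $u_{i^\star 1}=0$ then $q_{i^\star 1}=0$). So the optimum in your construction is always $-\theta$, independent of the \textsc{Set Cover} instance, and the reduction does not distinguish yes-instances from no-instances. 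Your proposed fix (a), zeroing out preference weights of unwanted contracts, does not prevent this: a contract can be recommended and advertised even with zero attraction, and coverage is a constraint on the $\mathbb{I}_{ik}$, not on the $u_{ij}$. Moreover, you cannot selectively suppress the full bundle's attraction without killing all others, because under the linear specification $u_{ij}=\sum_k \mathbb{I}_{ik}v_{kj}-\beta_j p_{ij}$ the preference weight is additive over subsystems; making the full bundle's weight zero forces every per-subsystem contribution to zero.

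The paper sidesteps this entirely: it drops the coverage and pricing constraints, keeps the nontrivial choice-probability objective, and reduces \textsc{Partition} to a two-group assortment problem where the concavity of the revenue function $F(z)=\tfrac{1}{2}\tfrac{z}{1+z}+\tfrac{3}{2}\tfrac{z}{1+z}-\tfrac{2z}{(C+1)^2}$ in the total selected weight $z=\sum_i c_i y_i$ forces the optimum to hit exactly $z=C$. The hardness therefore comes from the interplay between the fractional attraction objective and the advertising cost, not from covering. If you want to salvage a \textsc{Set Cover} reduction, you would need a mechanism that makes the full bundle (and other non-family contracts) strictly unprofitable to recommend while keeping family contracts viable; the additive structure of both $u_{ij}$ and the per-contract margin makes that very hard to engineer within this model.
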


To solve this NP-hard problem, we propose two tractable approaches in the next section. 

\section{Solution Approaches}\label{Sec:solu}
In this section, we focus on the solution approaches for problem (\texttt{JDPEW}). Before proceeding, we first present a specific expression of the demand model where the attraction value of each contract is a linear function of the average valuation and price of the contract.

%The average valuation for each contract depends on the subsystems covered by the contract. 
Let $v_{kj}$ represent the average valuation stemming from the protection for subsystem $k$ perceived by group-$j$ customers. 
According to \cite{fader1996modeling}, \cite{benson2018discrete}, and \cite{tulabandhula2023multi}, the attraction values for different subsystems are assumed to be additive (this setting is relaxed in Appendix \ref{Sec:Rob}), and thus the gross attraction value of contract $i$ perceived by group-$j$ customers can be expressed as $\sum_{k \in \mathcal{W}} \mathbb{I}_{i k} v_{k j}$.
Further let $\beta_j>0$ denote the price sensitivity of group-$j$ customers. Then, the preference weight $u_{ij}(p_{ij})$ can be expressed as
\begin{equation}\label{Equ:utility}
	u_{ij}(p_{ij})=\sum_{k \in \mathcal{W}} \mathbb{I}_{i k} v_{k j}-\beta_jp_{ij}.
\end{equation}		 
By substituting Equation~\eqref{Equ:utility} into Equation~\eqref{choice prob_0}, the probability that a group-$j$ customer chooses contract $i$ can be rewritten as
\begin{equation}\label{choice prob}
\begin{aligned}
	q_{i j}\left(\mathbf{p}_{j}, \mathbf{x}_{j}\right)
	= \frac{\sum_{k \in \mathcal{W}} \mathbb{I}_{i k} v_{k j}-\beta_j p_{ij}}{u_{0 j}+\sum_{i' \in \mathcal{N}}\left[\sum_{k \in \mathcal{W}} \mathbb{I}_{i' k} v_{k j}-\beta_j p_{i'j} \right] x_{i'j}} x_{i j}.
\end{aligned}	
\end{equation}
Our subsequent analysis is based on this particular form of the basic attraction model.

For ease of presentation, we define $\hat{V}_{i j}=\sum_{k \in \mathcal{W}} \mathbb{I}_{i k} v_{k j}$, $\hat{P}_{i j}=\sum_{k \in \mathcal{W}} \mathbb{I}_{i k} P^{0}_{k j}$, and $\hat{C}_{i j}=\sum_{k \in \mathcal{W}} \mathbb{I}_{i k} F_{k j} c_{kj}$. Then, substituting Equation~(\ref{constraint5}) into the objective function  of problem (\texttt{JDPEW}) in Equation~(\ref{mainObject}) yields 
\begin{equation}
  \sum_{j \in \mathcal{M}} \lambda_j \frac{\sum_{i \in \mathcal{N}}\left[\hat{V}_{i j}-\beta_j \hat{P}_{i j}\left(\sum_{h \in \mathcal{L}} z_{i h} d_h\right)\right]\left[\hat{P}_{i j}\left(\sum_{h \in \mathcal{L}} z_{i h} d_h\right)-\hat{C}_{i j}\right] x_{i j}}{u_{0 j}+\sum_{i' \in \mathcal{N}}\left[\hat{V}_{i' j}-\beta_j \hat{P}_{i' j}\left(\sum_{h \in \mathcal{L}} z_{i' h} d_h\right)\right] x_{i' j}}-\sum_{i \in \mathcal{N}} \theta y_i. \label{reformulation:obj1}
\end{equation}

Based on this new formulation, we propose two solution approaches\textemdash that is, mixed-integer second-order cone programming (MISOCP) reformulation and iterative two-step (ITS) approaches in Sections~\ref{Subsec:socp} and \ref{Subsec:twostage}, respectively, and analyze their properties with respect to optimality and computational complexity.

\subsection{MISOCP Reformulation} \label{Subsec:socp}
Letting $\hat{R}_j=\max _{i \in \mathcal{N}}\{\hat{P}_{i j}-\hat{C}_{i j}\}$, the objective function in \eqref{reformulation:obj1} can be rewritten as
\begin{equation} \footnotesize
   \sum_{j \in \mathcal{M}} \lambda_j \hat{R}_j -\sum_{j \in \mathcal{M}} \lambda_j \frac{u_{0 j} \hat{R}_j+\sum_{i \in \mathcal{N}}\left[\hat{V}_{i j}-\beta_j \hat{P}_{i j}\left(\sum_{h \in \mathcal{L}} z_{i h} d_h\right)\right]\left[\hat{R}_j-\hat{P}_{i j}\left(\sum_{h \in \mathcal{L}} z_{i h} d_h\right)+\hat{C}_{i j}\right] x_{i j}}{u_{0 j}+\sum_{i' \in \mathcal{N}}\left[\hat{V}_{i' j}-\beta_j \hat{P}_{i' j}\left(\sum_{h \in \mathcal{L}} z_{i' h} d_h\right)\right] x_{i' j}}- \sum_{i\in N}\theta y_i. \label{reformulaton:obj2}
\end{equation}
Because the term $\sum_{j \in \mathcal{M}} \lambda_j \hat{R}_j$ is a constant, maximizing the expected total profit given by Equation~\eqref{reformulaton:obj2} is equivalent to minimizing 
\begin{equation}\footnotesize
  \sum_{j \in \mathcal{M}} \lambda_j \frac{u_{0 j} \hat{R}_j+\sum_{i \in \mathcal{N}}\left[\hat{V}_{i j}-\beta_j \hat{P}_{i j}\left(\sum_{h \in \mathcal{L}} z_{i h} d_h\right)\right]\left[\hat{R}_j-\hat{P}_{i j}\left(\sum_{h \in \mathcal{L}} z_{i h} d_h\right)+\hat{C}_{i j}\right] x_{i j}}{u_{0 j}+\sum_{i' \in \mathcal{N}}\left[\hat{V}_{i' j}-\beta_j \hat{P}_{i' j}\left(\sum_{h \in \mathcal{L}} z_{i' h} d_h\right)\right] x_{i' j}}+ \sum_{i\in N}\theta y_i. 
\end{equation}
Let 
\begin{equation*}
   t_j=\frac{1}{u_{ 0j}+\sum_{i \in \mathcal{N}}\left[\hat{V}_{i j}-\beta_j \hat{P}_{i j}\left(\sum_{h \in \mathcal{L}} z_{i h} d_h\right)\right] x_{i j}}
\end{equation*}
and
\begin{equation*}
   w_j=u_{ 0j}+\sum_{i \in \mathcal{N}}\left[\hat{V}_{i j}-\beta_j \hat{P}_{i j}\left(\sum_{h \in \mathcal{L}} z_{i h} d_h\right)\right] x_{i j}.
\end{equation*}
Then, problem (\texttt{JDPEW}) can be transformed to (\texttt{JDPEW-1}), as follows:
\begin{align}
  \min \,\ & \sum_{j \in \mathcal{M}} \lambda_j u_{0 j} \hat{R}_j t_j+\sum_{i \in \mathcal{N}} \theta y_i 
  +\sum_{j \in \mathcal{M}} \lambda_j  \notag \\
  & \quad \times \sum_{i \in \mathcal{N}} \hat{V}_{i j} \Bigg[ \hat{R}_j x_{i j} - \hat{P}_{i j}\left(\sum_{h \in \mathcal{L}} z_{i h} x_{i j} d_h\right) +  \hat{C}_{i j}x_{ij}\Bigg] t_j \notag \\
  & \quad + \sum_{j \in \mathcal{M}} \lambda_j \sum_{i \in \mathcal{N}} \beta_j \hat{P}_{i j} \Bigg[- \hat{R}_j \left(\sum_{h \in \mathcal{L}} z_{i h} x_{i j} d_h\right) \notag \\
  & \quad + \hat{P}_{i j}\left(\sum_{h \in \mathcal{L}} z_{i h} x_{i j} d_h^2\right) - \left(\sum_{h \in \mathcal{L}} z_{i h} x_{i j} d_h\right) \hat{C}_{i j}\Bigg] t_j \\
  \text{s.t.} \,\ & \eqref{constraint1}-\eqref{constraint6} &&\notag \\
  & \,\ t_j w_j = 1 \,\ &&  \forall j \in \mathcal{M} \label{constraint:tw} \\
  & \,\ w_j = u_{ 0j} + \sum_{i \in \mathcal{N}}\left[\hat{V}_{i j} x_{i j} - \beta_j \hat{P}_{i j}\left(\sum_{h \in \mathcal{L}} z_{i h} x_{i j} d_h\right)\right] && \forall j \in\mathcal{M} \\
  & \,\ t_j \geq 0, \, w_j \geq 0 &&  \forall j \in \mathcal{M} \label{w}.
\end{align}
Let $s_{i j h}=z_{i h} x_{i j}$. According to \cite{mccormick1976computability}, we can replace constraint $s_{i j h} = z_{i h} x_{i j}$ with the following $4\times m \times n \times l$ constraints: 
\begin{align}
    s_{i j h} & \geq z_{i h}+\left(x_{i j}-1\right) &&   \forall  i \in \mathcal{N},~j \in \mathcal{M},~h \in \mathcal{L}\label{s_start}\\
    s_{i j h} & \geq 0 &&   \forall  i \in \mathcal{N},~j \in \mathcal{M},~h \in \mathcal{L} \\
    s_{i j h} & \leq z_{i h} &&   \forall  i \in \mathcal{N},~j \in \mathcal{M},~h \in \mathcal{L} \qquad\\
    s_{i j h} & \leq x_{i j} &&   \forall  i \in \mathcal{N},~j \in \mathcal{M},~h \in \mathcal{L}. \label{s_end}
\end{align}
Then, problem (\texttt{JDPEW-1}) can be further transformed to (\texttt{JDPEW-2}), shown below:
\begin{align}
  \min \,\ & \sum_{j \in \mathcal{M}} \lambda_j u_{0 j} \hat{R}_j t_j+\sum_{i \in \mathcal{N}} \theta y_i 
  +\sum_{j \in \mathcal{M}} \lambda_j \notag \\
  & \quad \times \sum_{i \in \mathcal{N}} \hat{V}_{i j} \Bigg[\hat{R}_j x_{i j} - \hat{P}_{i j}\left(\sum_{h \in \mathcal{L}} s_{ij h}  d_h\right) + \hat{C}_{i j}x_{ij}\Bigg] t_j \notag \\
  & \quad +\sum_{j \in \mathcal{M}} \lambda_j \sum_{i \in \mathcal{N}} \beta_j \hat{P}_{i j} \Bigg[- \hat{R}_j \left(\sum_{h \in \mathcal{L}} s_{ij h} d_h\right) \notag \\
  & \quad + \hat{P}_{i j}\left(\sum_{h \in \mathcal{L}} s_{ij h} d_h^2\right) - \left(\sum_{h \in \mathcal{L}} s_{ij h} d_h\right) \hat{C}_{i j}\Bigg] t_j \label{obj-22} \\
  \text{s.t.} \,\ & \eqref{constraint1}-\eqref{constraint6},\, \eqref{constraint:tw},\, \text{and} \, \eqref{w}-\eqref{s_end} & \notag \\
  & w_j=u_{ 0j} + \sum_{i \in \mathcal{N}}\left[\hat{V}_{i j} x_{i j}-\beta_j \hat{P}_{i j}\left(\sum_{h \in \mathcal{L}} s_{ij h}  d_h\right)\right] & \forall j \in \mathcal{M}. \label{wj}
\end{align}

Let $g_{i j}=t_j x_{i j}$ and $o_{i j h}=\mathrm{s}_{i j h} t_j$. Similarly, we can replace these two equalities with constraints \eqref{g_start}--\eqref{g_end} and \eqref{o_start}--\eqref{o_end}, respectively. Meanwhile, the objective function in \eqref{obj-22} can be rewritten as the objective function in \eqref{obj-26}.
Observing that $w_j \geq 0$, we can state constraint~\eqref{constraint:tw} in the rotated cone form:
\begin{equation}
    w_j t_j \geq 1 \quad  \forall  j \in \mathcal{M} \label{wjtj}.
\end{equation}
By doing so, we reformulate (\texttt{JDPEW}) as the following problem, denoted by (\texttt{JDPEW-MISOCP}):
\begin{align}
    \min \,\ & \sum_{j \in \mathcal{M}} \lambda_j u_{0 j} \hat{R}_j t_j+\sum_{i \in \mathcal{N}} \theta y_i 
    +\sum_{j \in \mathcal{M}} \lambda_j \notag\\
    & \quad \times \sum_{i \in \mathcal{N}} \hat{V}_{i j} \Bigg[\hat{R}_j g_{i j} - \hat{P}_{i j}\left(\sum_{h \in \mathcal{L}} o_{ij h}  d_h\right) + \hat{C}_{i j}g_{ij} \Bigg] \notag\\
    & \quad + \sum_{j \in \mathcal{M}} \lambda_j \sum_{i \in \mathcal{N}} \beta_j \hat{P}_{i j} \Bigg[- \hat{R}_j \left(\sum_{h \in \mathcal{L}} o_{ij h}  d_h\right) \notag\\ 
    & \quad + \hat{P}_{i j}\left(\sum_{h \in \mathcal{L}} o_{ij h} d_h^2\right) - \left(\sum_{h \in \mathcal{L}} o_{ij h} d_h\right) \hat{C}_{i j} \Bigg] \label{obj-26} \\
    \text{s.t.} \,\ & \eqref{constraint1}-\eqref{constraint6},\,
     \eqref{w}-\eqref{s_end}\, \text{and}\, \eqref{wj}-\eqref{wjtj} & \notag \\
    & g_{i j} \geq t_j+\left(x_{i j}-1\right) t_j^U \,\ && \forall i \in \mathcal{N},~j \in \mathcal{M} \label{g_start} \\ 
    & g_{i j} \geq x_{i j} t_j^L \,\ && \forall i \in \mathcal{N},~j \in \mathcal{M}  \\
    & g_{i j} \leq t_j+\left(x_{i j}-1\right) t_j^L \,\ &&  \forall i \in \mathcal{N},~j \in \mathcal{M}\\
    & g_{i j} \leq x_{i j} t_j^U \,\ &&  \forall i \in \mathcal{N},~j \in \mathcal{M}  \label{g_end} \\ 
    & o_{j i h} \geq s_{i j h} t_j^L \,\ &&  \forall i \in \mathcal{N},~j \in \mathcal{M},~h \in \mathcal{L} \label{o_start} \\ 
    & o_{j i h} \geq t_j+\left(s_{i j h}-1\right) t_j^U \,\ && \forall i \in \mathcal{N},~j \in \mathcal{M},~h \in \mathcal{L}\\
    & o_{i j h} \leq t_j+\left(s_{i j h}-1\right) t_j^L \,\ && \forall i \in \mathcal{N},~j \in \mathcal{M},~h \in \mathcal{L} \\
    & o_{i j h} \leq s_{i j h} t_j^U \,\ && \forall i \in \mathcal{N},~j \in \mathcal{M},~h \in \mathcal{L} \label{o_end}\\
    & y_i \in\{0,1\},\, x_{i j} \in\{0,1\},\, z_{i h} \in\{0,1\}, \, p_{ij}\geq 0 \,\ &&  \forall i \in \mathcal{N},~j \in \mathcal{M},~h \in \mathcal{L},
\end{align}
where $t_j^L = 1/(u_{ 0j}+\sum_{i \in \mathcal{N}} \hat{V}_{i j})$ and $t_j^U=1/u_{ 0j}$.

After the aforementioned transformation, the original problem (\texttt{JDPEW}) is simplified to an MISOCP problem, which can be solved by commercial solvers (e.g., Gurobi) directly. 
Indeed, the following proposition shows that the two optimization problems are equivalent.

\begin{prop}\label{prop2}
    The optimal solution $(\mathbf{X}^*, \mathbf{y}^{*}, \mathbf{Z}^{*}, \mathbf{P}^{*})$ obtained by solving (\texttt{JDPEW-MISOCP}) is also optimal for (\texttt{JDPEW}).
\end{prop}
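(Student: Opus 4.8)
The plan is to verify that the chain of transformations $(\texttt{JDPEW}) \to (\texttt{JDPEW-1}) \to (\texttt{JDPEW-2}) \to (\texttt{JDPEW-MISOCP})$ preserves the optimal value and induces a correspondence between optimal solutions once the auxiliary variables are projected out. Each link is one of three kinds: (i) an exact algebraic rewriting of the objective (adding the constant $\sum_{j}\lambda_j\hat R_j$ and regrouping terms, i.e.\ passing from \eqref{reformulation:obj1} to \eqref{reformulaton:obj2}); (ii) an exact McCormick linearization of a bilinear term one of whose factors is $\{0,1\}$-valued; or (iii) the single genuine relaxation, replacing the equality $t_jw_j=1$ by the rotated second-order cone inequality $w_jt_j\ge 1$ in \eqref{wjtj}. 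I would treat (i) and (ii) as routine and concentrate the argument on showing (iii) is tight at optimality.

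For (ii), I would note that $s_{ijh}=z_{ih}x_{ij}$ is a product of two binary variables, so the envelope \eqref{s_start}--\eqref{s_end} forces $s_{ijh}\in\{0,1\}$ and $s_{ijh}=z_{ih}x_{ij}$ exactly; in particular $s_{ijh}$ is again binary. Consequently $g_{ij}=t_jx_{ij}$ and $o_{ijh}=s_{ijh}t_j$ are each a product of a binary variable and the continuous variable $t_j$, for which the McCormick envelope is exact provided valid bounds $t_j\in[t_j^L,t_j^U]$ are used. These bounds are justified by bounding $w_j$: since every preference weight $\hat V_{ij}-\beta_j p_{ij}$ is nonnegative and $u_{0j}>0$, one has $u_{0j}\le w_j\le u_{0j}+\sum_{i\in\mathcal N}\hat V_{ij}$, hence $t_j=1/w_j\in\big[1/(u_{0j}+\sum_i\hat V_{ij}),\,1/u_{0j}\big]=[t_j^L,t_j^U]$. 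Thus every constraint added in passing to (\texttt{JDPEW-2}) and (\texttt{JDPEW-MISOCP}) merely encodes, exactly, the defining identities for the auxiliary variables, so any feasible $(\mathbf X,\mathbf y,\mathbf Z,\mathbf P)$ of (\texttt{JDPEW}) extends uniquely (by setting $w_j,t_j,s_{ijh},g_{ij},o_{ijh}$ to their defining values) to a feasible point of (\texttt{JDPEW-MISOCP}) with equal objective.

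The crux is step (iii). After the rewriting, the objective \eqref{obj-26} has the form $\sum_{j\in\mathcal M}\lambda_j N_j t_j+\sum_{i\in\mathcal N}\theta y_i$, where $N_j=u_{0j}\hat R_j+\sum_{i\in\mathcal N}\big[\hat V_{ij}-\beta_j p_{ij}\big]\big[\hat R_j-(p_{ij}-\hat C_{ij})\big]x_{ij}$ is the numerator appearing in \eqref{reformulaton:obj2}. I claim $N_j\ge 0$: in each summand the first factor is a nonnegative preference weight, and the second factor is nonnegative because $\hat R_j=\max_{i}\{\hat P_{ij}-\hat C_{ij}\}\ge \hat P_{ij}-\hat C_{ij}\ge p_{ij}-\hat C_{ij}$, using that the discount multiplier $\sum_h z_{ih}d_h$ does not exceed one so $p_{ij}\le\hat P_{ij}$; moreover $u_{0j}\hat R_j\ge 0$. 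Hence the objective is nondecreasing in each $t_j\ge 0$. In (\texttt{JDPEW-MISOCP}) the variable $t_j$ is constrained only from below via $w_jt_j\ge 1$ (with $w_j\ge u_{0j}>0$ fixed by \eqref{wj}), so at any optimal solution $t_j$ is driven to its least feasible value $1/w_j$, i.e.\ $w_jt_j=1$. Therefore an optimizer of (\texttt{JDPEW-MISOCP}) satisfies every equality that was relaxed, and restricting it to $(\mathbf X^*,\mathbf y^*,\mathbf Z^*,\mathbf P^*)$ yields a feasible point of (\texttt{JDPEW}) with the same objective value. Combined with the reverse extension from the previous paragraph, the optimal values coincide, so $(\mathbf X^*,\mathbf y^*,\mathbf Z^*,\mathbf P^*)$ is optimal for (\texttt{JDPEW}).

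The main obstacle is the sign bookkeeping behind the nonnegativity of $N_j$ and the positivity and boundedness of $w_j$ — that is, making explicit the standing assumptions ($u_{0j}>0$, preference weights $\hat V_{ij}-\beta_j p_{ij}\ge 0$ over the relevant price range, discount multipliers in $(0,1]$, and $P^0_{kj}\ge F_{kj}c_{kj}$ if one also wants $\hat R_j\ge 0$) under which the rotated-cone relaxation is tight and the McCormick bounds are valid. Once these are in place, the remainder is a mechanical check that the added linear and conic constraints reproduce the variable substitutions exactly.
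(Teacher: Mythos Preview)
Your proposal is correct and follows the same approach the paper implicitly relies on --- verifying that each step of the reformulation chain in Section~\ref{Subsec:socp} preserves optimality --- but you supply substantially more detail than the paper, whose proof is the single sentence ``This is obvious based on the transformation process in Section~\ref{Subsec:socp}.'' In particular, your explicit argument that the numerator $N_j\ge 0$ (so that the relaxation $w_jt_j\ge 1$ is tight at any minimizer) and your verification that the McCormick envelopes are exact because one factor in each bilinear term is binary are the nontrivial points the paper leaves to the reader.
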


\subsection{ITS Approach}\label{Subsec:twostage}
Although the MISOCP reformulation approach can obtain the optimal solution, it has high computational complexity, especially when the number of decision variables and constraints are large. Therefore, as the number of subsystems increases, the computational complexity of the MISOCP reformulation approach grows exponentially, requiring a substantial amount of time to reach the optimal solution. 

To improve computational efficiency, we propose an alternative approach that solves the joint optimization problem (\texttt{JDPEW}) by an ITS scheme. It consists of two steps: EW design and EW pricing. In the first step, subsystems are bundled and recommended in EW contracts with a fixed price discount. In the second step, the pricing discount is optimized based on the EW contracts obtained in the first step. The two steps are implemented iteratively until all decision variables are convergent. We detail the sub-optimization problems in the two steps as follows.

\textbf{Step 1: EW Design.} In this step, we focus on the problem of designing EW contracts with different subsystems for multiple groups of products from the EW provider's perspective. Given the initial prices of all subsystems and the pre-specified discounts, the optimal subsystem recommendation can be obtained by solving the following optimization problem: 
\begin{align*}
  \max \,\ & \sum_{j \in \mathcal{M}} \lambda_j  \sum_{i \in \mathcal{N}}q_{i j}\left(\mathbf{p}_{j}^{*},\mathbf{x}_{j}\right) \\
  & \quad \times \left(p^{*}_{ij} - \sum_{k \in \mathcal{W}} \mathbb{I}_{i k} f_{k j} c_{kj}\right) - \sum_{i \in \mathcal{N}} \theta y_i \\
  \text {s.t.}\,\  & x_{i j} \leq y_i &&  \forall i \in \mathcal{N},~j \in \mathcal{M} \\ %\tag{\textsc{Step-1}}
  \,\ & \sum_{i \in \mathcal{N}} \mathbb{I}_{i k} x_{i j} \geq 1 && \forall k \in \mathcal{W},~j \in \mathcal{M} \\
  \,\ & y_i \in\{0,1\},\, x_{i j} \in\{0,1\} && \forall i \in \mathcal{N},~j \in \mathcal{M},
\end{align*}
where $q_{i j}(\mathbf{p}_{j}^{*},\mathbf{x}_{j})$ is the choice probability of group-$j$ customers for EW contract $i$, which is given by Equation~\eqref{choice prob}, with $\mathbf{p}^{*}_{j}$ replacing $\mathbf{p}_{j}$.

The objective is the same as that of the original problem (\texttt{JDPEW}), that is, to maximize the expected total profit for the EW provider. However, to reduce computational complexity, the decision variables in this step are only related to EW design (i.e., $\mathbf{X}$ and $\mathbf{y}$), provided that the EW price $\mathbf{P}^{*}$ and discount $\mathbf{Z}^{*}$ are given. Meanwhile, only constraints related to $\mathbf{X}$ and $\mathbf{y}$ are retained. 

%\subsubsection*{Step 2: EW Pricing.}
\textbf{Step 2: EW Pricing.} In this step, the aim is to determine the prices of EW contracts designed in Step 1 by optimizing the discounts. The price optimization problem is formulated as follows:
%\begin{equation}
\begin{align*}
  \max \,\ & \sum_{j \in \mathcal{M}} \lambda_j \sum_{i \in \mathcal{N}}q_{i j}\left(\mathbf{p}_{j}, \mathbf{x}_{j}^*\right) \\
  & \quad \times \left[p_{ij}-\sum_{k \in \mathcal{W}} \mathbb{I}_{i k} f_{k j} c_{kj}\right]-\sum_{i \in \mathcal{N}} \theta y_i^*\\
  \text{s.t.}\,\ & \sum_{h \in \mathcal{L}} z_{i h}=1 && \forall i \in \mathcal{N} \\
  & \,\ \left(\sum_{h \in \mathcal{L}} z_{i h} d_h-\sum_{h^{\prime} \in L} z_{i^{\prime} h^{\prime}} d_{h^{\prime}}\right) \\
  & \quad \times \left(\sum_{k \in \mathcal{W}} \mathbb{I}_{i k}-\sum_{k \in \mathcal{W}} \mathbb{I}_{i^{\prime} k}\right) \leq 0 &&    \forall i \in \mathcal{N},~ i^{\prime} \in N \\
  & \,\ p_{ij}=\left(\sum_{k\in \mathcal{W}}\mathbb{I}_{ik}P_{kj}^0\right)\left(\sum_{h\in \mathcal{L}}z_{ih}d_{h}\right) &&  \forall i \in \mathcal{N},~j \in \mathcal{M} \\
  & \,\ z_{ih} \in\{0,1\},\, p_{ij}\geq 0 &&  \forall i \in \mathcal{N},~h \in \mathcal{L}, %\tag{\textsc{Step-2}}
\end{align*}
%\end{equation}
where $q_{i j}(\mathbf{p}_{j}, \mathbf{x}^{*}_{j})$ is the choice probability of group-$j$ customers for EW contract $i$, which is given by Equation~\eqref{choice prob}, with $\mathbf{x}_j^{*}$ replacing $\mathbf{x}_j$. Given $\mathbf{X}^*$ and $\mathbf{y}^*$ obtained from Step 1, we focus only on optimizing the bundle prices in Step 2, so the corresponding decision variables are $\mathbf{Z}$ and $\mathbf{P}$.

Although the problem scale is reduced by two-step separation, the sub-problems in Steps 1 and 2 still need equivalent transformation so that they can be solved by commercial solvers directly. Detailed description on the transformations in the two steps can be found in Supplementary Online Materials (Appendix~\ref{app:reform}).
However, sequentially solving the sub-problems in two steps does not guarantee optimality for the joint optimization problem. We employ block coordinate descent (BCD) techniques and iterate through the two steps until convergence is reached for both solutions. More details are presented in Algorithm~\ref{alg:algorithm1}.

We further discuss the theoretical properties of the BCD algorithm in solving the optimization problem (\texttt{JDPEW}). 
While the ITS approach provides an approximation for the original problem (\texttt{JDPEW}) and does not guarantee an optimal solution within a limited computation time, it still possesses several desirable theoretical properties related to the stability and optimality in each iteration.
Specifically, if we define the objective function of problem (\texttt{JDPEW}) as $g(\cdot)$, that is, $g(\mathbf{X},\mathbf{y},\mathbf{P},\mathbf{Z}):=\sum_{j \in \mathcal{M}} \lambda_j \sum_{i \in \mathcal{N}} q_{i j}\left(\mathbf{p}_{j}, \mathbf{x}_{j}\right) \cdot\left(p_{ij} - \sum_{k \in \mathcal{W}} \mathbb{I}_{i k} f_{k j} c_{kj}\right)-\sum_{i \in \mathcal{N}} \theta y_i$, then we have the following propositions.
\begin{prop}[Stability] \label{prop3}
 	Let $(\mathbf{X}^{[\alpha]}, \mathbf{y}^{[\alpha]},\mathbf{P}^{[\alpha]},\mathbf{Z}^{[\alpha]})$ denote the solution obtained by Algorithm~\ref{alg:algorithm1} in the $\alpha$th iteration. Then, the sequence of objective value $g(\mathbf{X}^{[\alpha]}, \mathbf{y}^{[\alpha]},\mathbf{P}^{[\alpha]},\mathbf{Z}^{[\alpha]})$ generated by the BCD-based algorithm will converge to a constant value.     
\end{prop}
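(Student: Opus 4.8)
The plan is to show that the sequence of objective values $\{g(\mathbf{X}^{[\alpha]}, \mathbf{y}^{[\alpha]}, \mathbf{P}^{[\alpha]}, \mathbf{Z}^{[\alpha]})\}_{\alpha \geq 1}$ is (a) monotone non-decreasing and (b) bounded above, from which convergence to a constant follows by the monotone convergence theorem for real sequences. This is the standard argument for block coordinate descent, specialized to the two-block structure of the \texttt{JDPEW} problem.

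First I would establish monotonicity. Within iteration $\alpha$, Step 1 solves the EW-design subproblem exactly — it maximizes $g(\mathbf{X}, \mathbf{y}, \mathbf{P}^{[\alpha-1]}, \mathbf{Z}^{[\alpha-1]})$ over the feasible $(\mathbf{X}, \mathbf{y})$ — and the incumbent $(\mathbf{X}^{[\alpha-1]}, \mathbf{y}^{[\alpha-1]})$ together with the fixed price block remains feasible for that subproblem (it satisfies \eqref{constraint1} and \eqref{constraint2}). Hence the maximizer $(\mathbf{X}^{[\alpha]}, \mathbf{y}^{[\alpha]})$ achieves an objective value no smaller than $g(\mathbf{X}^{[\alpha-1]}, \mathbf{y}^{[\alpha-1]}, \mathbf{P}^{[\alpha-1]}, \mathbf{Z}^{[\alpha-1]})$. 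Symmetrically, Step 2 solves the EW-pricing subproblem exactly over $(\mathbf{Z}, \mathbf{P})$ with $(\mathbf{X}^{[\alpha]}, \mathbf{y}^{[\alpha]})$ fixed, and the previous discount block $(\mathbf{Z}^{[\alpha-1]}, \mathbf{P}^{[\alpha-1]})$ — appropriately re-priced via \eqref{constraint5} but note the discount assignment itself still satisfies \eqref{constraint3} and \eqref{constraint4} — is feasible, so again the value does not decrease. Chaining the two inequalities gives
\begin{equation*}
  g(\mathbf{X}^{[\alpha]}, \mathbf{y}^{[\alpha]}, \mathbf{P}^{[\alpha]}, \mathbf{Z}^{[\alpha]}) \;\geq\; g(\mathbf{X}^{[\alpha-1]}, \mathbf{y}^{[\alpha-1]}, \mathbf{P}^{[\alpha-1]}, \mathbf{Z}^{[\alpha-1]}).
\end{equation*}

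Next I would establish boundedness above. The feasible region of \texttt{JDPEW} is contained in a compact set: the binary variables $\mathbf{X}, \mathbf{y}, \mathbf{Z}$ live in a finite set, and for each feasible $(\mathbf{X}, \mathbf{Z})$ the price $\mathbf{p}_j$ is determined by \eqref{constraint5} as a product of a bounded sum of the fixed $P^0_{kj}$ and a discount $d_h$ from the finite ladder, hence $p_{ij}$ is uniformly bounded. The choice probabilities $q_{ij} \in [0,1]$, the marginal profits $p_{ij} - \sum_k \mathbb{I}_{ik} F_{kj} c_{kj}$ are bounded, and $\sum_i \theta y_i \leq n\theta$; therefore $g(\cdot)$ is bounded on the feasible set. (One subtlety worth flagging: one should confirm the denominator of $q_{ij}$ stays positive along the iterates so that $g$ is well-defined — this is handled by the same reasoning as in the MISOCP reformulation, where $w_j \geq u_{0j} > 0$.) Combining monotonicity with the upper bound, the sequence converges to some finite limit $g^\star$.

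The main obstacle is the bookkeeping around feasibility carry-over between the two subproblems — in particular, verifying that the discount/price block produced by Step 1's fixed values is still a feasible starting point for Step 2 (and vice versa), since constraint \eqref{constraint4} couples discounts across contracts through their subsystem counts, and constraint \eqref{constraint2} couples the recommendation variables across subsystems. Once one checks that neither step can shrink the feasible region seen by the other at the incumbent, the monotonicity inequality is immediate and the rest is routine. Note the statement only claims convergence of the objective value, not of the iterates themselves, so no argument about uniqueness of limit points or about the subproblem minimizers being single-valued is needed.
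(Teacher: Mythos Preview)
Your proposal is correct and shares the paper's core idea: each BCD step can only improve the objective, so the value sequence is monotone non-decreasing. The paper's own proof, however, takes a slightly shorter second step. Rather than arguing boundedness and invoking the monotone convergence theorem, it observes that the entire feasible set is \emph{finite}: $\mathbf{X},\mathbf{y},\mathbf{Z}$ are binary, and $\mathbf{P}$ is fully determined by $\mathbf{Z}$ through \eqref{constraint5}, so $g$ can assume only finitely many values. A monotone sequence in a finite set must become exactly constant after finitely many iterations, which is marginally stronger than mere convergence and sidesteps the well-definedness and compactness bookkeeping you flagged. You actually note the finiteness yourself (``the binary variables $\mathbf{X},\mathbf{y},\mathbf{Z}$ live in a finite set, and for each feasible $(\mathbf{X},\mathbf{Z})$ the price $\mathbf{p}_j$ is determined by \eqref{constraint5}'') but then retreat to a boundedness argument; simply stopping at that observation would align your proof with the paper's and shorten it considerably.
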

\begin{prop}[Optimality within iteration]\label{prop4}
    After a finite number of iterations, the solutions to problem (\texttt{JDPEW}), obtained by Algorithm~\ref{alg:algorithm1}, have the following properties:
\begin{enumerate}[label=(\alph*)]
  \item For any $\mathbf{P}^{\prime}\in \mathbb{R}_+^{n\times m}$ and $\mathbf{Z}^{\prime} \in \{0,1\}^{n\times l}$, $g(\mathbf{X}^*, \mathbf{y}^*,\mathbf{P}^{\prime},\mathbf{Z}^{\prime}) \leq g(\mathbf{X}^*, \mathbf{y}^*,\mathbf{P}^*,\mathbf{Z}^*)$.
  \item For any $\mathbf{X}^{\prime} \in \{0,1\}^{n\times m} $ and $\mathbf{y}^{\prime} \in \{0,1\}^{n}$, $g(\mathbf{X}^{\prime}, \mathbf{y}^{\prime},\mathbf{P}^*,\mathbf{Z}^*) \leq g(\mathbf{X}^*, \mathbf{y}^*,\mathbf{P}^*,\mathbf{Z}^*)$.
\end{enumerate}
\end{prop}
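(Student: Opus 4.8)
The plan is to exploit the block structure of the ITS scheme. First I would record that the feasible region of (\texttt{JDPEW}) is the Cartesian product of two uncoupled blocks: a \emph{design} block $(\mathbf{X},\mathbf{y})$ restricted only by \eqref{constraint1}, \eqref{constraint2} and the associated integrality conditions, and a \emph{pricing} block $(\mathbf{P},\mathbf{Z})$ restricted only by \eqref{constraint3}--\eqref{constraint5} and the associated nonnegativity/integrality conditions; no constraint of (\texttt{JDPEW}) links the two blocks. Hence Step~1 of Algorithm~\ref{alg:algorithm1} is exactly the exact maximization $\max_{(\mathbf{X},\mathbf{y})} g(\mathbf{X},\mathbf{y},\mathbf{P}^{*},\mathbf{Z}^{*})$ over the design block and Step~2 is exactly $\max_{(\mathbf{P},\mathbf{Z})} g(\mathbf{X}^{*},\mathbf{y}^{*},\mathbf{P},\mathbf{Z})$ over the pricing block, each solved to global optimality through the reformulations in Appendix~\ref{app:reform}. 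Because the incumbent of a block stays feasible for the subproblem in which that block is re-optimized, the objective sequence $g^{[\alpha]}:=g(\mathbf{X}^{[\alpha]},\mathbf{y}^{[\alpha]},\mathbf{P}^{[\alpha]},\mathbf{Z}^{[\alpha]})$ is non-decreasing — the same monotonicity that underlies Proposition~\ref{prop3}.

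Next, since $\mathbf{X}$, $\mathbf{y}$ and $\mathbf{Z}$ are binary and $\mathbf{P}$ is completely determined by $\mathbf{Z}$ via the bundle-pricing identity \eqref{constraint5}, the objective $g$ takes only finitely many values on the feasible set. A non-decreasing sequence with values in a finite set is eventually constant, so there is a finite $\bar\alpha$ with $g^{[\alpha]}=g^{*}$ for all $\alpha\ge\bar\alpha$; this is the ``finite number of iterations'' in the statement. I would fix any $\alpha\ge\bar\alpha$ and set $(\mathbf{X}^{*},\mathbf{y}^{*},\mathbf{P}^{*},\mathbf{Z}^{*}):=(\mathbf{X}^{[\alpha]},\mathbf{y}^{[\alpha]},\mathbf{P}^{[\alpha]},\mathbf{Z}^{[\alpha]})$, so that $g(\mathbf{X}^{*},\mathbf{y}^{*},\mathbf{P}^{*},\mathbf{Z}^{*})=g^{*}$.

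Part~(a) would then follow directly: Step~2 of iteration~$\alpha$ produces $(\mathbf{P}^{*},\mathbf{Z}^{*})$ as a global maximizer of $g(\mathbf{X}^{*},\mathbf{y}^{*},\cdot,\cdot)$ over the pricing block, hence $g(\mathbf{X}^{*},\mathbf{y}^{*},\mathbf{P}^{\prime},\mathbf{Z}^{\prime})\le g(\mathbf{X}^{*},\mathbf{y}^{*},\mathbf{P}^{*},\mathbf{Z}^{*})$ for every $(\mathbf{P}^{\prime},\mathbf{Z}^{\prime})$ feasible for that block (in particular, for every $\mathbf{Z}^{\prime}\in\{0,1\}^{n\times l}$ and the price $\mathbf{P}^{\prime}$ induced by \eqref{constraint5}). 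For Part~(b) I would look at iteration~$\alpha+1$: its Step~1 returns a global maximizer of $g(\cdot,\cdot,\mathbf{P}^{[\alpha]},\mathbf{Z}^{[\alpha]})=g(\cdot,\cdot,\mathbf{P}^{*},\mathbf{Z}^{*})$ over the design block, with optimal value $M$. Since $(\mathbf{X}^{*},\mathbf{y}^{*})$ is feasible for that subproblem, $M\ge g(\mathbf{X}^{*},\mathbf{y}^{*},\mathbf{P}^{*},\mathbf{Z}^{*})=g^{*}$; since Step~2 of iteration~$\alpha+1$ can only raise the objective, $M\le g^{[\alpha+1]}=g^{*}$. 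Therefore $M=g^{*}$, i.e.\ $(\mathbf{X}^{*},\mathbf{y}^{*})$ already attains the design-block maximum of $g(\cdot,\cdot,\mathbf{P}^{*},\mathbf{Z}^{*})$, which is exactly the claim of Part~(b).

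The step I expect to be the main obstacle is the bookkeeping behind Part~(b). One must resist claiming that the iterates $(\mathbf{X}^{[\alpha]},\mathbf{y}^{[\alpha]})$ themselves converge — only the objective values are guaranteed to stabilize — and instead deduce optimality of the \emph{stabilized} pair $(\mathbf{X}^{*},\mathbf{y}^{*})$ from the Step~1 re-optimization performed in the \emph{following} iteration, via the sandwich $g^{[\alpha]}\le M\le g^{[\alpha+1]}$ with $g^{[\alpha]}=g^{[\alpha+1]}=g^{*}$. A smaller point to pin down is the precise scope of ``for any $\mathbf{P}^{\prime}\in\mathbb{R}_+^{n\times m}$'' in Part~(a): the comparison is intended over prices consistent with \eqref{constraint5} (equivalently, over the pricing block's feasible set), and this restriction should be stated explicitly when writing the argument.
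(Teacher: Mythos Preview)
Your proposal is correct and follows the same approach as the paper: both arguments rest on the fact that the reformulations in Appendix~\ref{app:reform} guarantee that each Step~1 and Step~2 subproblem is solved to global optimality, so the blockwise optimality claims follow once the objective sequence has stabilized. Your treatment is considerably more careful than the paper's very brief proof---in particular, your sandwich argument $g^{[\alpha]}\le M\le g^{[\alpha+1]}$ for Part~(b) addresses a timing subtlety (that $(\mathbf{X}^{[\alpha]},\mathbf{y}^{[\alpha]})$ was optimized against $(\mathbf{P}^{[\alpha-1]},\mathbf{Z}^{[\alpha-1]})$, not $(\mathbf{P}^{[\alpha]},\mathbf{Z}^{[\alpha]})$) that the paper leaves implicit---but the underlying idea is the same.
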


\begin{algorithm}[t]
    \fontsize{8}{8}\selectfont
	\caption{BCD-based algorithm for (\texttt{JDPEW})}
	\label{alg:algorithm1}
	\KwIn{Coefficients $\theta$, $\{\lambda_j\}_{j\in \mathcal{M}}$, $\{\mathbb{I}_{ik}\}_{i \in \mathcal{N}, k \in \mathcal{W}}$, $\{P^{0}_{kj}\}_{k \in \mathcal{W}, j\in \mathcal{M}}$, $\{d_h\}_{h\in \mathcal{L}}$, $\{F_{kj}\}_{k\in \mathcal{W}, j\in \mathcal{M}}$, $\{c_{jk}\}_{j\in \mathcal{M},k\in \mathcal{W}}$}
	\KwOut{\ $\mathbf{X}^*$, $\mathbf{y}^*$, $\mathbf{P}^*$ and $\mathbf{Z}^*$}  
	\BlankLine
	Initialize $\mathbf{X}^{[0]}$, $\mathbf{y}^{[0]}$, $\mathbf{P}^{[0]}$, and $\mathbf{Z}^{[0]}$
    \BlankLine Set $ \mathbf{X}^* \leftarrow \mathbf{X}^{[0]}$,  $ \mathbf{y}^* \leftarrow \mathbf{y}^{[0]}$, $ \mathbf{P}^* \leftarrow \mathbf{P}^{[0]} $, $ \mathbf{Z}^* \leftarrow \mathbf{Z}^{[0]}$,
     $\alpha \leftarrow 1$	
	\BlankLine \While{\textnormal{$\mathbf{X}^*$, $\mathbf{y}^*$, $\mathbf{P}^*$ and $\mathbf{Z}^*$ do not converge}}{
		Obtain $\mathbf{X}^*$ and $\mathbf{y}^*$ by solving the EW design problem in (\textsc{Step-1}) based on $\mathbf{P}^{[\alpha-1]}$ and $\mathbf{Z}^{[\alpha-1]}$
        
        Set $\mathbf{X}^{[\alpha]} \leftarrow \mathbf{X}^*$ and $\mathbf{y}^{[\alpha]} \leftarrow \mathbf{y}^*$ 

        Obtain $\mathbf{Z}^*$ and $\mathbf{P}^*$ by solving the EW pricing problem in (\textsc{Step-2}) based on $\mathbf{X}^{[\alpha]}$ and $\mathbf{y}^{[\alpha]}$ 

        Set $\mathbf{Z}^{[\alpha]} \leftarrow \mathbf{Z}^*$, $\mathbf{P}^{[\alpha]} \leftarrow \mathbf{P}^*$, and $\alpha \leftarrow \alpha+1$
	}

\end{algorithm}

\subsection{Computation Complexity of the Two Approaches}
We are now in a position to discuss the computation complexities of the proposed solution approaches. We summarize in Table~\ref{tab:ComtwoAppro} the number of binary variables, continuous variables, linear constraints, and quadratic constraints during the solving processes based on the two approaches. In particular, for the ITS approach, we present the complexity for each step in a single iteration (note that this approach needs multiple iterations to converge). Recall that $m$ is the total number of products, $w$ is the number of subsystems, $n=2^w-1$ is the total number of potential EW contracts, and $l$ is the number of discount ladders. 
As $n$ is much larger than $m$, $w$, and $l$, the ITS approach can significantly reduce computational complexity by decreasing the number of decision variables. 
That is, although the ITS approach cannot guarantee global optimality, it still has potential advantage in computation, especially when the number of subsystems is large.

\begin{table}[t]
\centering
\fontsize{8}{10}\selectfont
\caption{Computational complexities of the two approaches}
\label{tab:ComtwoAppro}

\begin{subtable}[t]{1.0\textwidth}
\centering
\caption{Number of variables}
\label{tab:1a}
{\fontsize{8}{10}\selectfont 
\renewcommand{\arraystretch}{1.5} 
\begin{tabular}{>{\centering\arraybackslash}m{2.0cm}  % Constraints
    >{\centering\arraybackslash}m{3.5cm}  % Approach 1
    >{\centering\arraybackslash}m{0.1cm}  % spacer
    >{\centering\arraybackslash}m{3.2cm}  % Step1
    >{\centering\arraybackslash}m{3.2cm} }
    \toprule
    \multirow{2}[4]{*}{ Variables} &  { MISOCP} &       & \multicolumn{2}{c}{\makecell{ITS (for each iteration)}} \\
    \cmidrule{2-2}\cmidrule{4-5}          & { Total} &       & {Step 1} & \multicolumn{1}{c}{Step 2}  \\
    \midrule
    {Binary} & $mn + nl + n$ &       & $mn + n$ & $nl$ \\
    {Continuous} &  $2mnl + 2mn + 2m$ &       & $mn + 2m$  & $mnl + mn +2m$  \\
    \bottomrule
    \end{tabular}
    }
\end{subtable}

\vspace{1em}

\begin{subtable}[t]{1.0\textwidth}
\centering

\caption{Number of constraints}
\label{tab:1b}
{\fontsize{8}{10}\selectfont
\renewcommand{\arraystretch}{1.5} 
\begin{tabular}{>{\centering\arraybackslash}m{2.0cm}  % Constraints
    >{\centering\arraybackslash}m{3.5cm}  % Approach 1
    >{\centering\arraybackslash}m{0.1cm}  % spacer
    >{\centering\arraybackslash}m{3.2cm}  % Step1
    >{\centering\arraybackslash}m{3.2cm} } % Step2
    %>{\centering\arraybackslash}m{3.2cm}  }
    \toprule
    \multirow{2}[4]{*}{ Variables} &  MISOCP &       & \multicolumn{2}{c}{ITS (for each iteration)} \\
    \cmidrule{2-2}\cmidrule{4-5}          & { Total} &       & { Step 1} & \multicolumn{1}{c}{ Step 2} \\
    \midrule
    {Linear} & $n^2+7mnl+6mn+2m+wm$ &       & $5mn+wm+m$ & $n^2+4mnl+mn+n+m$ \\ 
    { Quadratic} & $m$     &       & $m$     & $m$  \\
    \bottomrule
    \end{tabular}%
    }
\end{subtable}
\end{table}

\section{Numerical Studies} \label{Sec:Numerial}
In this section, we take third-party automobile EWs as a motivating example and conduct numerical experiments to verify the performance of the proposed solution approaches (i.e., MISOCP in Section~\ref{Subsec:socp} and ITS in Section~\ref{Subsec:twostage}).
We first give an overview of the experiment setting (Section \ref{Subsec:overview}), and then compare the performance of the proposed two approaches in solution optimality and computational efficiency (Section~\ref{Subsec:2appro}). We further compare the joint optimization model with three benchmarks in Section~\ref{Subsec:compbench}. Finally, we perform sensitivity analyses in Section~\ref{Subsec:sensitive} to analyze the effects of some important factors and draw managerial insights.

\subsection{Experiment Setting} \label{Subsec:overview}
We first provide details about the data and the parameter setting. The numerical results in this section are based on randomly generated data sets. If not stated otherwise, the parameters are set as follows. 

For parameters related to customers, we consider $m=5$ different groups, with the price of their automobiles increasing from group $j=1$ to $j=5$. We assume $\lambda_j=0.2$ for $j\in \{1,2,\dots,5\}$, which will be relaxed in Section~\ref{Subsec:sensitive}.  
According to \cite{chen2009consumers}, customers who own higher-value automobiles tend to be higher-value customers with a lower sensitivity to EW price. Therefore, we assume that the sensitivity parameter $\beta_j$ decreases with $j$. At the same time, consumers are more inclined to purchase EW services for higher-value automobiles \citep{padmanabhan1993warranty}. As a result, the preference weight of outside option $u_{0j}$ also decreases with $j$. 
Without loss of generality, we rank the subsystems in ascending order of failure cost. We further assume that the attraction value $v_{kj}$ for group-$j$ customers follows a uniform distribution.
Table~\ref{tab:Parameters setting} summarizes the aforementioned parameter setting.

\begin{table}[htbp]
  \centering
  \fontsize{8}{10}\selectfont
  \renewcommand{\arraystretch}{1.5} 
  \caption{Parameter setting}
    \begin{tabularx}{\textwidth}{>{\centering\arraybackslash}X>{\centering\arraybackslash}X>{\centering\arraybackslash}X>{\centering\arraybackslash}X>{\centering\arraybackslash}X >{\centering\arraybackslash}X>{\centering\arraybackslash}X>{\centering\arraybackslash}X>{\centering\arraybackslash}X}
    \toprule
    \multirow{2}[4]{*}{$j$} & \multirow{2}[4]{*}{$v_{kj}$} & \multirow{2}[4]{*}{$u_{0j}$} & \multirow{2}[4]{*}{$\beta_j$} & \multicolumn{5}{c}{ $c^{0}_{kj}$ ($\times 10^{3})$} \\
\cmidrule{5-9}          &       &       &       & $k=1$ & $k=2$ & $k=3$ & $k=4$ & $k=5$ \\
    \midrule
    1     & $U(20, 25)$ & 300   & 0.05  & 0.6   & 1.2 & 1.8  & 3.0  & 4.8 \\
    2     & $U(30, 35)$ & 250   & 0.04  & 3.0  & 3.6  & 4.2  & 5.4  & 6.0 \\
    3     & $U(35, 40)$ & 200   & 0.02  & 6.0  & 7.2  & 8.4  & 9.6  & 12.0 \\
    4     & $U(40, 45)$ & 100   & 0.005 & 12.0 & 15.0 & 18.0 & 21.0 & 30.0 \\
    5     & $U(45, 50)$ & 50    & 0.0001 & 30.0 & 36..0 & 42.0 & 48.0 & 54.0 \\
    \bottomrule
    \end{tabularx}%
  \label{tab:Parameters setting}%
\end{table}%

For parameters related to EW providers, the failure cost $c_{kj}$ of each subsystem $k$ is also presented in Table~\ref{tab:Parameters setting}. We assume that the initial price $P_{kj}^0$ before optimization is proportional to $c_{kj}$; that is, $c_{kj} = \gamma P_{kj}^0$ with $\gamma=6$. Generally, the parameter $\gamma$ is larger than 1; otherwise, customers would prefer to repair the failed automobiles rather than purchasing EWs. The advertising cost for promoting each EW contract is $\theta$. 
Finally, we assume that the failure probability of each subsystem follows a uniform distribution; that is, $F_{kj} \sim U(0.05,\,0.2)$.

We note that for all numerical experiments, the number of EW contracts, $n$, and the number of customer groups, $m$, are assumed to be identical. All numerical tests were conducted on a Windows 10 64-bit Intel Core i7-5600 with 2.0 gigahertz and 16 gigabytes of memory. The tests are implemented in Python 3.8 to use the Gurobi solver.

\subsection{Comparison of the Two Solution Approaches}\label{Subsec:2appro}
We now investigate the optimality and computational efficiency of the MISOCP reformulation and ITS solution approaches. 
%The computational complexity of problem (\texttt{JDPEW}) is relatively high and increases exponentially with most parameters.  For example, if we consider $w$ subsystems, $m$ groups of customers, and $l$ pre-defined discounts, then the potential number of feasible solutions is ${(l^{2^w-1})}^m$. Therefore, it is necessary to analyze appropriate solution approaches for our model and choose an efficient one. 
In the MISOCP reformulation approach, we propose to transform (\texttt{JDPEW}) to (\texttt{JDPEW-MISOCP}), which can be solved by state-of-the-art commercial solvers.
As discussed previously, although this approach can achieve the optimal solution, it suffers from high computational complexity with significantly increased number of integer decision variables. 
In the ITS approach, however, we decompose the original problem (\texttt{JDPEW}) into two sequential sub-problems and solve them in an iterative manner. 
Specifically, we first set $\mathbf{Z}^0$ and $\mathbf{P}^0$ as the initial values of $\mathbf{Z}^{*}$ and $\mathbf{P}^*$, respectively. Specifically, the element in $\mathbf{Z}^0$ is calculated as 
\begin{equation*}
  z_{i h}^0 = 
  \begin{cases}
  1 & \text { if } h=\arg\max\left\{k\in \mathcal{W}: \mathbb{I}_{i k}=1\right\}, \\ 
  0 & \text { otherwise,}
  \end{cases}
\end{equation*}
for all $i\in \mathcal{N}$ and $h\in \mathcal{L}$, and the corresponding price is set to $p_{ij}^0=(\sum_{k\in \mathcal{W}}\mathbb{I}_{ik}P_{kj}^0)(\sum_{h\in \mathcal{L}}z_{ih}^0 d_{h})$. Then, we iterate the EW design and pricing process based on Algorithm~\ref{alg:algorithm1}. 

% Table generated by Excel2LaTeX from sheet 'Sheet1'
\begin{table}[t]

       \fontsize{8}{10}\selectfont
      \centering
      \caption{Profits and  computational performance of the proposed two approaches}
      \renewcommand{\arraystretch}{1.5} 
        \begin{tabularx}{\textwidth}{p{0.3cm}>{\centering\arraybackslash}X >{\centering\arraybackslash}X>{\centering\arraybackslash}X p{0.05cm}>{\centering\arraybackslash}X >{\centering\arraybackslash}X}
        \toprule
        \multirow{2}[4]{*}{$w$} & \multicolumn{3}{c}{ Profit} &       & \multicolumn{2}{c}{  Time (s)}\\
    \cmidrule{2-4}\cmidrule{6-7}          & {MISOCP} & { ITS} & {Gap (in \%)} &       & {MISOCP} & {ITS} \\
        \midrule
        3     & 2106.34  & 2106.33  & 0.0379 &       & 1437.24  & 1.72  \\
        4     & 3405.30  & 3405.35  & -0.0015 &       & 3600.00  & 195.31  \\
        5     & 4905.16  & 4908.67  & -0.0715 &       & 3600.00  & 693.35  \\
        \bottomrule
        \parbox{\textwidth}{
        \fontsize{8}{8}\selectfont
        \vspace{0.2cm}
        \textbf{Note:} Gap = [Profit(MISOCP)$-$Profit(ITS)]/Profit(ITS) $\times$ 100\%.
        }
        \end{tabularx}%
      \label{tab:comp-twoproposed}%
\end{table}%

Table~\ref{tab:comp-twoproposed} presents the average profits obtained by the two solution approaches over 30 replications.
It should be noted that we impose a limit of 3600 seconds for the MISOCP approach and set a maximum runtime of 300 seconds for each step of the ITS approach. %After two iterations, we report the cumulative execution time of these iterations as the definitive ITS runtime.
As shown in Table~\ref{tab:comp-twoproposed}, as we restrict the computation time to 3600 seconds, it is insufficient for the MISOCP reformulation approach to reach optimal solutions; its advantage in optimality diminishes under limited computational time, especially when the number of subsystems becomes large. 
In contrast, as the number of subsystems increases (say, $w$ is more than 3), the ITS approach could achieve better solutions than its counterpart (in terms of average profits) within shorter computation time, making it particularly suitable for scenarios involving a large number of subsystems.

In addition, Figure~\ref{fig:comparison} illustrates the distributions of profit and computation time across the 30 replications. The result indicates that relative to the MISOCP reformulation approach, the ITS approach is able to attain comparable profit levels while requiring substantially less computation time. Importantly, this computational advantage becomes more pronounced as the number of subsystems increases.

In summary, if the scale of the optimization problem is small (say, $w$ is less than 4), the MISOCP reformulation approach is preferred; otherwise, the ITS approach performs significantly better for large-scale optimization problems.

\begin{figure}[t]
  \centering
  \begin{subfigure}[t]{0.8\textwidth}
    \centering
    \includegraphics[width=\textwidth]{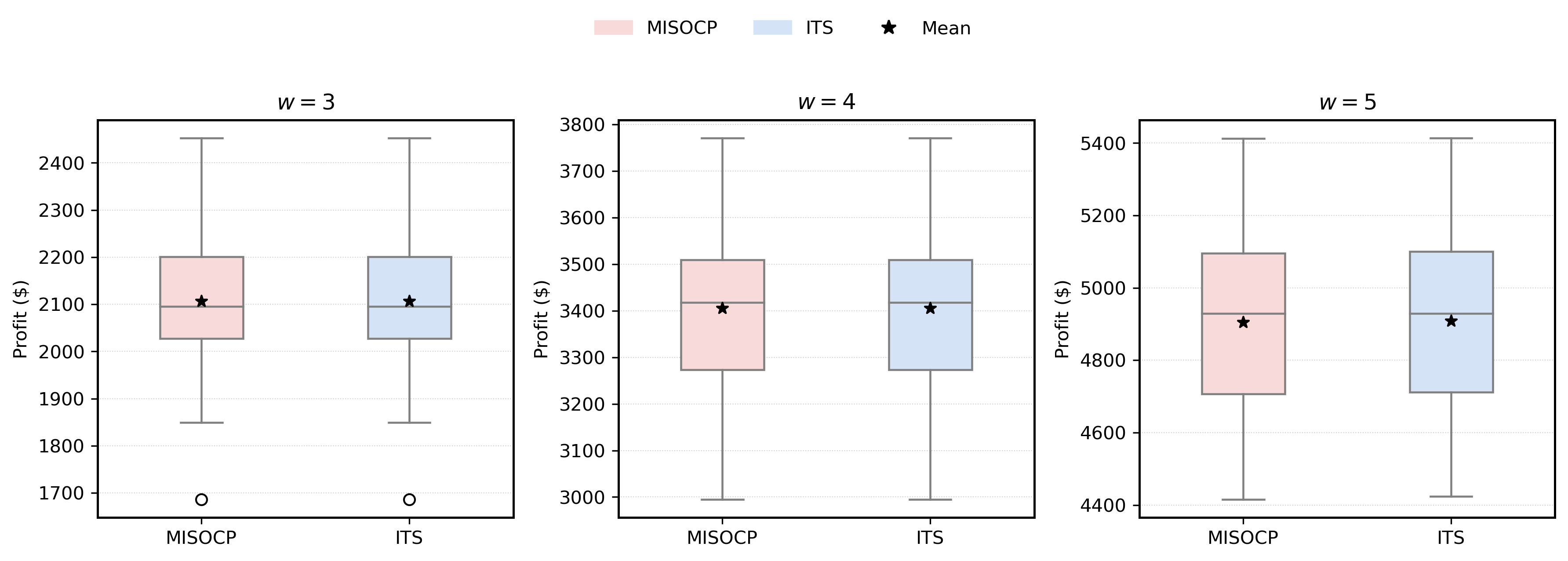} 
    \caption{Total profit}
    \label{fig:profit}
  \end{subfigure}
  
  \vspace{0.5cm} 
  
  \begin{subfigure}[t]{0.8\textwidth}
    \centering
    \includegraphics[width=\textwidth]{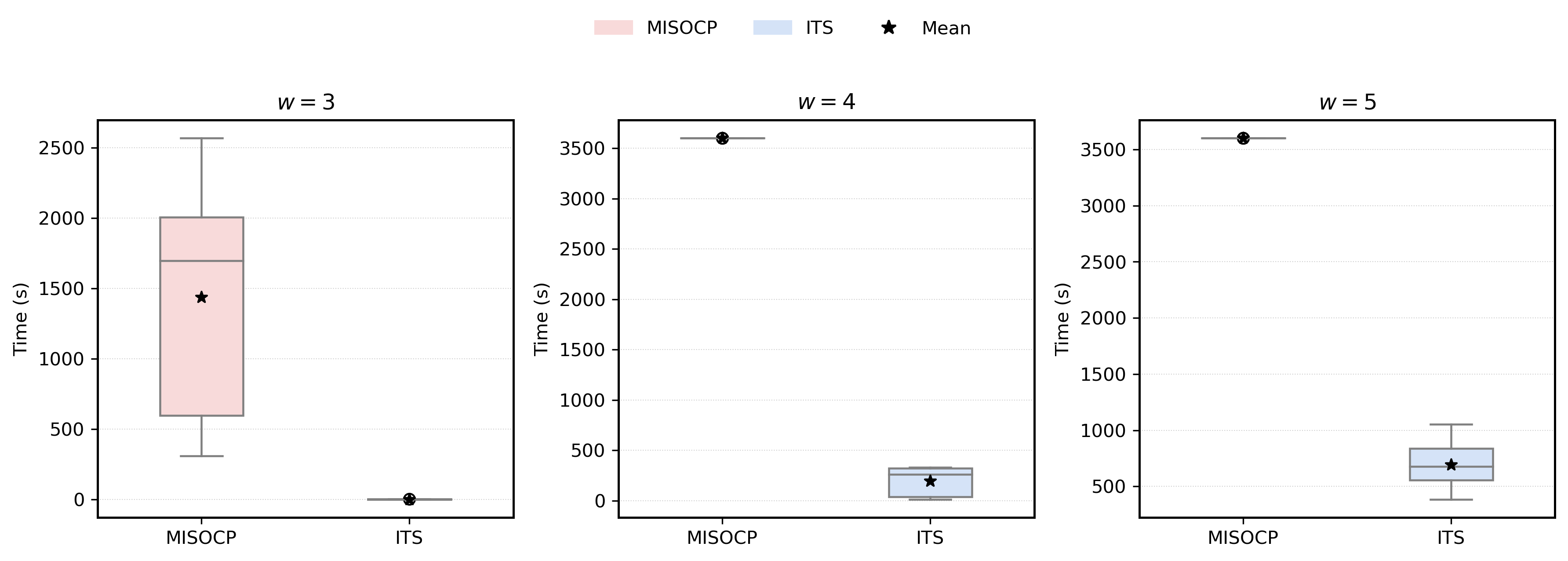} 
    \caption{Computation time}
    \label{fig:time}
  \end{subfigure}

  \caption{Performance comparison of the two proposed approaches}
  \label{fig:comparison}
\end{figure}

\subsection{Comparison with Other Benchmarks}\label{Subsec:compbench}
As our model is motivated by real-world challenges faced by third-party EW providers in the aftermarket, we introduce in Section~\ref{Sec:Bench} three benchmark strategies that are currently adopted in practice, and present the comparison results in Section~\ref{Sec:comp} to validate the effectiveness of the proposed joint optimization model.

\subsubsection{Benchmarks} \label{Sec:Bench}

To evaluate the necessity of jointly optimizing EW design and pricing in our proposed model, and motivated by the prevailing industry practice, we develop three benchmark strategies for comparison purposes. Benchmark 1 (\texttt{BM-1}) focuses on consistent EW design and ignores the heterogeneity of automobiles in different groups. Compared to \texttt{BM-1}, our proposed method has two main improvements: (1) personalized EW design and (2) joint optimization of EW design and pricing. To isolate and evaluate the contribution of each enhancement, we introduce two additional benchmarks: \texttt{BM-2} (personalized EW design) and \texttt{BM-3} (bundle pricing with consistent design), aiming to demonstrate the profit gain attributed to each enhancement. The benchmarks are detailed as follows:

\texttt{BM-1}: Consistent EW design. In this benchmark, a consistent EW contract with the same subsystem composition is designed for all groups of automobiles while ignoring the heterogeneity in automobile groups. 

\texttt{BM-2}: Personalized EW design. Different from \texttt{BM-1}, the diversities in different automobile groups are taken into account, which aligns with the formulation in Step 1 of the ITS approach as well as the current strategy adopted by Lizhen Company. This benchmark serves to assess the importance of pricing and the necessity of joint optimization.

\texttt{BM-3}: Bundle pricing with consistent design. This strategy jointly optimizes EW design and pricing but ignores the heterogeneity in different automobile groups. Compared to our proposed model, adopting a consistent EW design can reduce the complexity of problem-solving.  

More details on the optimization models and the corresponding solutions for the three benchmarks can be found in Supplementary Online Materials (Appendix~\ref{app:benchmark}).

\subsubsection{Performance Comparison} \label{Sec:comp}
To demonstrate the importance of joint EW design and pricing, we now compare our joint optimization model with the three benchmarks.
The performance of the four models is evaluated by the average profit over 30 replications. 
To demonstrate the superiority of the joint model, we examine the profit improvement of the joint model relative to each benchmark, denoted by Increment. 
Specifically, we define $\text{Increment} = \text{Profit}(\texttt{Joint}) - \text{Profit}(\texttt{BM})$ for each benchmark. 
We consider different initial EW prices for each subsystem by changing the ratio $\gamma$ (recall that $c_{kj} = \gamma P_{kj}^0$).
Table~\ref{tab:comp-3benchmark} presents the average profits and profit improvements under different combinations of $w$ and $\gamma$. It is worth noting that we employ both MISOCP and ITS approaches to solve the joint optimization model (the computation time is limited to 600 seconds) and report the higher profit value produced by the two approaches.

% Table generated by Excel2LaTeX from sheet 'Tables'

\begin{table}[ht!]
  \centering
  \fontsize{8}{10}\selectfont
  \caption{Performance comparison with three benchmarks}
 { 
\renewcommand{\arraystretch}{1.5} % 调整行高为1.5倍
\begin{tabularx}{\textwidth}{>{\centering\arraybackslash}X p{1cm} >{\centering\arraybackslash}X>{\centering\arraybackslash}X>{\centering\arraybackslash}X>{\centering\arraybackslash}X p{0.5cm} >{\centering\arraybackslash}X>{\centering\arraybackslash}X>{\centering\arraybackslash}X}
    \toprule
    \multirow{2}[4]{*}{$w$} & \multirow{2}[4]{*}{$\gamma$} & \multicolumn{4}{c}{Profit} & \multirow{2}[4]{*}{} & \multicolumn{3}{c}{Increment} \\
\cmidrule{3-6}\cmidrule{8-10}          &       & \texttt{Joint} & \texttt{BM-1} & \texttt{BM-2} & \texttt{BM-3} &       & \texttt{BM-1} & \texttt{BM-2} & \texttt{BM-3} \\
    \midrule
    3 & 6     & 2106.34 & 1772.17 &  1802.06 & 2039.82 &       & 334.17  &  304.28 & 66.52  \\
          & 8     & 1180.28 & 934.81 &  948.61 & 1152.24 &       & 245.47  & 231.67 & 28.04  \\
          & 10     & 606.19 & 415.56  &  425.17  & 594.84 &       & 190.63 &  181.02  & 11.35  \\
    4 & 6    & 3404.39 & 2647.01 & 2685.21 & 3304.56 &       &  757.38  &  719.18 
     & 99.83 \\
          & 8     & 1927.44 & 1334.06 &  1355.63 & 1876.41 &       &  593.38 &   571.81 & 51.03 \\
          & 10     & 993.44 
    & 553.15 &  585.24 
  & 967.08 
 &       &  440.29 
 &  408.20 
   & 26.37 
  \\
    5 & 6      & 4900.85 & 3473.81 &  3529.53 & 4698.93  &       &  1427.04 &  1371.32 & 201.93   \\
          & 8      & 2807.42 & 1676.89 &  1716.60 & 2721.39 &    &  1130.53 
  & 1090.82 
 & 86.03 
 \\
          & 10     & 1433.23 
  & 688.34 &  730.77 & 1417.12 &       &  744.90 & 702.47 & 32.43  \\
    \bottomrule

%\parbox{\textwidth}{
%    \fontsize{8}{8}\selectfont
%    \vspace{0.2cm}
%    \textbf{Note:} Benefit(\%) = [Profit(Joint)$-$Profit(BM)]/Profit(BM) $\times$ 100\%. 
%    }
    \end{tabularx}}%
  \label{tab:comp-3benchmark}%
\end{table}%

We can see from Table~\ref{tab:comp-3benchmark} that for all parameter combinations considered, the joint model always leads to higher total profits than the three benchmarks, demonstrating its superiority. 
In particular, as the number of subsystems $w$ increases, the advantage of joint optimization tends to become more prominent, reflected by the increase in Increment over $w$. 
In addition, we also observe that as $\gamma$ increases, the total profits for all models become lower. This is because for a given failure cost $c_{kj}$ of each subsystem, a larger $\gamma$ corresponds to a lower initial price $P_{kj}^0$, resulting in decreased total profit.
Furthermore, we find that for a fixed number of subsystems, as $\gamma$ increases (i.e., the initial price becomes lower), the Increment exhibits a declining trend. 
This indicates that the advantage of the proposed model is more significant when the initial price is higher, because such condition facilitates our joint model to maximize profit through optimal discount in pricing.

\subsection{Sensitivity Analyses} \label{Subsec:sensitive}

We now conduct sensitivity analyses on the composition of different customer groups (Section~\ref{Subsubsec:comcus}) and the subsystems' failure probabilities (Section~\ref{Subsubsec:failure}). 
By doing so, we attempt to derive managerial insights to guide the EW design and pricing practice. 

\subsubsection{Proportions of Customer Groups}\label{Subsubsec:comcus} 

Customers who purchase automobiles from different groups might have varying valuations for EW contracts, and the proportions of different customer groups in the market might affect the results. To assess this effect, we consider five groups of customers (i.e., $m=5$) with increasing values: the first group has the lowest value, while the fifth group has the highest. 
%\textcolor{blue}{[this is inconsistent with the setting in Table 2, where $v_{kj}$ increases in $j$ stochastically.]} 
We examine the following three cases of customer proportions. Case 1: The five customer groups have uniform proportions (i.e., $\lambda_j=0.2$, $j\in\{1,2,\dots,5$\}), which is the same as the setting in Section~\ref{Subsec:overview}. Case 2: The proportion of customers in each group shows a decreasing trend (specifically, $\lambda_1=0.4$, $\lambda_2=0.2$, $\lambda_3=0.15$, $\lambda_4=0.1$, $\lambda_5=0.05$). Case 3: The customer distribution is symmetric, with a high proportion for the middle group (specifically, $\lambda_1=0.1$, $\lambda_2=0.2$, $\lambda_3=0.4$, $\lambda_4=0.2$, $\lambda_5=0.1$). Specifically, we define Benefit (in \%) to evaluate the advantages of our proposed model; specifically, $\text{Benefit} = [\text{Profit}(\texttt{Joint}) - \text{Profit}(\texttt{BM})]/ \text{Profit(\texttt{BM})}\times 100\% $.

\begin{table}[t]
  \centering
  \fontsize{8}{10}\selectfont 
  \caption{Effect of customer composition on total profits}
 {
\renewcommand{\arraystretch}{1.5} % 调整行高为1.5倍
\begin{tabularx}{\textwidth}{>{\centering\arraybackslash}X p{1.0cm} >{\centering\arraybackslash}X>{\centering\arraybackslash}X>{\centering\arraybackslash}X>{\centering\arraybackslash}X p{0.5cm} >{\centering\arraybackslash}X>{\centering\arraybackslash}X>{\centering\arraybackslash}X}
    \toprule
    \multirow{2}[4]{*}{$w$} & \multirow{2}[4]{*}{Cases} & \multicolumn{4}{c}{Profit}    &       & \multicolumn{3}{c}{Benefit (in \%)} \\
\cmidrule{3-6}\cmidrule{8-10}          &       & \texttt{Joint} & \texttt{BM-1}  & \texttt{BM-2}  & \texttt{BM-3}  &       & \texttt{BM-1}  & \texttt{BM-2}  & \texttt{BM-3} \\
    \midrule
    3     & Case 1 & 2076.29 & 1742.9 & 1771.28 & 2009.86 &       & 19.29 & 17.42 & 3.33 \\
          & Case 2 & 730.83 & 621.96 & 634.01 & 705.64 &       & 17.56 & 15.33 & 3.57 \\
          & Case 3 & 1470.21 & 1253.31 & 1277.25 & 1417.63 &       & 17.36 & 15.18 & 3.70 \\
    4     & Case 1 & 3381.75 & 2621.66 & 2664.64 & 3284.39 &       & 29.09 & 27.03 & 2.97 \\
          & Case 2 & 1240.38 & 997.62 & 1010.04 & 1211.55 &       & 24.39 & 22.87 & 2.37 \\
          & Case 3 & 2472.64 & 1966.35 & 1997.76 & 2404.73 &       & 25.86 & 23.90  & 2.82 \\
    5     & Case 1 & 4926.63 & 3516.56 & 3574.34 & 4749.31 &       & 40.25 & 38.02 & 3.80 \\
          & Case 2 & 1820.51 & 1372.31 & 1393.99 & 1708.69 &       & 32.74 & 30.69 & 6.65 \\
          & Case 3 & 3584.12 & 2625.48 & 2672.33 & 3414.00  &       & 36.68 & 34.31 & 5.04 \\
    \bottomrule
    
    %\parbox{\textwidth}{
    %\fontsize{8}{8}\selectfont
    %\vspace{0.2cm}
    %\textbf{Note:} 
    %We consider five types of customers (i.e., $j=5$) in each case, labelled Type 1 to Type 5, in order of increasing price band of their automobiles. Specifically, in Case 1, the proportions of all types of customers are set as 0.2; in Case 2, the proportions of customers from Type 1 to Type 5 are 0.4, 0.2, 0.15, 0.1, and 0.05, respectively; in Case 3, the proportions become 0.1, 0.2, 0.4, 0.2, and 0.1, respectively.}
    \end{tabularx}}%
  \label{tab:customer-comp}%
\end{table}%

Table~\ref{tab:customer-comp} presents the total profits and the Benefit values under different customer proportion settings, which further demonstrates the advantages of joint optimization.
More importantly, we find that Case 1 leads to the highest profit regardless of the number of subsystems, $w$. Even in Case 3 where medium-value customers dominate the market, the total profit is still lower than that in Case 1 with uniform proportions. These observations highlight the importance of high-value customers, who serve as the primary profit source for EW providers. 
In addition, we find that the profit improvements of the joint design and pricing model relative to the benchmarks (particularly, \texttt{BM-1} and \texttt{BM-2}) in Case 1 are fairly larger than those in Cases 2 and 3. 
Because more high-value customers in Case 1 typically generate higher marginal returns, they are charged higher EW prices while exhibiting relatively lower price sensitivity. Therefore, attracting and retaining high-value customers is a critical consideration in EW promotion strategies.

\subsubsection{Effect of Failure Probability}\label{Subsubsec:failure}

Failure probabilities of the subsystems are critical factors that providers must consider during EW design and pricing. We now evaluate the impact of subsystem failure probabilities on the results. To this end, we consider the settings where failure probabilities of subsystems follow homogeneous and heterogeneous distributions.

In the homogeneous, uniform-distribution setting, failure probability of each subsystem is assumed to follow $U(0.01,0.05)$ in the low failure-probability case (denoted by HU-L) and $U(0.05,0.10)$ in the high failure-probability case (denoted by HU-H). In the heterogeneous, uniform-distribution setting (denoted by HeU), failure probabilities are generated as $F_{1j} \sim U(0, 0.03)$, $F_{2j} \sim U(0.02, 0.05)$, $F_{3j} \sim U(0.03, 0.06)$, $F_{4j} \sim U(0.04, 0.07)$, and  $F_{5j} \sim U(0.05, 0.08)$ for $j \in \{1,2,\dots,5\}$. In the setting of mixed uniform and Normal distributions (denoted by UN-M), we set $F_{1j} \sim U(0, 0.05)$, $F_{3j} \sim U(0.06, 0.11)$, $F_{5j} \sim U(0.04, 0.09)$ and $F_{2j} \sim N(0.05, 0.1^2)$, $F_{4j} \sim N(0.04, 0.1^2)$, both truncated to $[0, 1]$.

Table~\ref{tab10} reports the total profits and the corresponding Benefit values in different failure-probability settings. By comparing the HU-L and HU-H scenarios, we find that a higher failure probability significantly decreases the EW provider's profit. 
Interestingly, by comparing the proposed joint optimization model with \texttt{BM-1} and \texttt{BM-2}, we find that the advantage of the joint model becomes significant especially when the failure probability is high or exhibits substantial heterogeneity. 
This is because the flexible pricing strategies allow EW providers to implement price discrimination based on failure risk, charging higher prices for high-risk products and lower prices for low-risk ones, thereby achieving higher profits.
Compared to \texttt{BM-3}, which is a simplified joint optimization model that recommends a uniform EW design across different product tiers to reduce computational complexity, Table \ref{tab10} shows that the profit gap between \texttt{BM-3} and the joint model narrows as the failure probability increases, but widens with the number of subsystems. This finding suggests that in scenarios with a limited number of subsystems or a higher failure probability, \texttt{BM-3} can be considered as a viable substitute for the joint optimization model.

%In addition to evaluating the performance of different optimization methods, we also verify the robustness of our proposed approach. By comparing our proposed design and pricing approach under different distributions of failure probabilities, we conclude that it is evident that our approach demonstrates superior performance and high robustness consistently. 
We also examine scenarios in which there are correlations in the failure probabilities of different subsystems to further validate the optimality and robustness of our proposed method. More details are provided in Appendix~\ref{Sec:Rob}.

\begin{table}[t]
  \centering
  \fontsize{8}{10}\selectfont 
  \caption{Effect of failure probability on total profits}
     {
    \renewcommand{\arraystretch}{1.5} % 调整行高为1.5倍
    \begin{tabularx}{\textwidth}{>{\centering\arraybackslash}X p{2cm} >{\centering\arraybackslash}X>{\centering\arraybackslash}X>{\centering\arraybackslash}X>{\centering\arraybackslash}X p{0.1cm} >{\centering\arraybackslash}X>{\centering\arraybackslash}X>{\centering\arraybackslash}X}
    \toprule
   \multirow{2}[4]{*}{$w$} & \multirow{2}[4]{*}{\makecell[l]{Failure\\ Probability}} & \multicolumn{4}{c}{{ Profit}} & \multirow{2}[1]{*}{} & \multicolumn{3}{c}{{ Benefit (in \%)}} \\
    \cmidrule{3-6}\cmidrule{8-10} &  & \texttt{Joint} & \texttt{BM-1} & \texttt{BM-2} & \texttt{BM-3} &  & \texttt{BM-1} & \texttt{BM-2} & \texttt{BM-3} \\
    \midrule
    3 & HU-L & 3166.79 & 2793.49 & 2857.44 &  3053.02  &   & 13.40   & 10.86  &  3.74  \\
    & HU-H &2078.83 & 1742.91 &  1775.35 &  2013.81  &   &  19.37   &  17.20  & 3.24 \\
    & HeU & 3047.45  & 2655.99 & 2719.69 & 2898.72 &       & 14.74  & 12.05 & 5.13  \\
     & UN-M & 2486.49 &  2132.21  & 2187.66 & 2400.82 &   & {16.62}  & {13.66}  & {3.57}  \\
    4 & HU-L & {5059.85} & {4277.58} & { 4361.40} & {4897.96} &  & { 18.30}   & {16.03}  & {3.31} \\
   & HU-H & {3397.43} & {2636.53} & { 2676.67} & {3294.78} &  & { 29.00}   & {27.08}  & { 3.13} \\
    & HeU & {4637.83} & {3870.54} & {3900.61} & {4490.06} &       & {19.82}  &  {18.90} & {3.29}  \\
    & UN-M & {4163.90} & {3353.49} & {3496.11} & {3994.30} &  & {24.17}  & { 19.10}  & { 4.25}  \\
   5 & HU-L & {7278.56} & {5865.17} & {5975.62} & {6942.29} &       & {24.12}  & { 21.83}  & {4.88}  \\
    & HU-H & {4919.05} & {3512.21} & {3565.57} & {4731.95} &       & {40.12}  & { 38.05}  & {4.00}  \\
    & HeU & {6361.35} & {4969.19} & {4999.07} & {6096.25} &       & {28.02}  & {27.25}  & {4.35}  \\
    & UN-M & {5865.41} & {4407.43} & {4577.36} & {5555.88} &       & {33.08}  &  {28.14} & {5.57}  \\
    \bottomrule

    %\parbox{\textwidth}{
    %\fontsize{8}{8}\selectfont 
    %\vspace{0.2cm}
    %\textbf{Note:}
    %In the homogeneous uniform case, failure probability of each subsystem is assumed to follow $U(0.01,0.05)$ in the low rate case (L) and $U(0.05,0.10)$ in the high rate case (H). In heterogeneous uniform setting, failure probabilities are generated as $F_{1j}\in U(0, 0.03)$, $F_{2j} \in U(0.02, 0.05)$, $F_{3j} \in U(0.03, 0.06)$, $F_{4j} \in U(0.04, 0.07)$, and  $F_{5j} \in U(0.05, 0.08)$ for $j\in\mathcal{J}$.  In the case of uniform and Normal distributions, we set $F_{1j} \in U(0, 0.05)$,  $F_{3j} \in U(0.06, 0.11)$, $F_{5j} \in U(0.04, 0.09)$, and  other two subsystems are set as $F_{2j} \in N(0.05, 0.1^2)$, $F_{4j} \in N(0.04, 0.1^2)$, both truncated to $[0, 1]$.}
    \end{tabularx}}
     \label{tab10}%
\end{table}%

\section{Conclusions}\label{Sec:con}
 
This work addresses a prevalent optimization problem faced by third-party EW providers in the aftermarket. 
We develop a joint EW design and pricing model for multi-tier products serviced by a third-party EW provider, with the aim of maximizing the expected total profit. 
Given the computational complexity of the model, we propose two solution approaches, namely, the MISOCP formulation and ITS approaches.
We develop corresponding algorithms for each approach to provide detailed procedural descriptions, and discuss the corresponding theoretical properties regarding stability and optimality of the solutions.
Numerical experiments demonstrate that the joint optimization model significantly increases the provider's expected total profit and exhibits strong robustness. 
Several managerial insights are also identified: First, customers who own higher-priced products contribute more significantly to the total profit, therefore EW providers should place greater emphasis on maintaining a high proportion of such customers and ensuring their satisfaction. Second, when the failure probabilities become higher and/or the number of subsystems gets larger, the advantages of jointly optimizing EW design and pricing become more pronounced, as simultaneously addressing both aspects allows for greater flexibility in mitigating the risks associated with high failure probabilities and system complexity.

Our paper is the pioneer work to study the design and pricing of EW contracts with differentiated compositions and prices. Along with our work, there are several potential directions for future research. 
First, as the Internet of Things becomes increasingly prevalent, it is essential to incorporate data on customers’ personalized usage behaviors into the design and pricing of EW contracts.
Second, our current model may not fully capture the impact of the number of covered subsystems and their potential interactions. Future research could extend the utility specification to incorporate subsystem interactions, enabling a more realistic representation of customer preferences.

\section*{Disclosure Statement}
No potential conflict of interest was reported by the authors. 

\section*{Data Availability Statement}
The authors confirm that the data supporting the findings of this study are available within the article.

%\section*{Funding}
%This work was supported by the National Natural Science Foundation of China (grant numbers 72201180) and the Fundamental Research Funds for the Central Universities (grant number YJ202254). 

%\if0\blind{%} \fi
%\section*{Acknowledgements}
%The authors are grateful to the 

\bibliographystyle{chicago}
\spacingset{1}
\bibliography{IISE-Trans}

\newpage
\setcounter{page}{1}

\appendix

\begin{spacing}{1.5}
\begin{center}
\textbf{\Large Supplemental Online Materials to ``Joint Bundle Design and Pricing for Extended Warranty Providers Servicing Multi-Tier Products"}
\end{center}

\vspace{-2em}

\section{Reformulations of Problem (\texttt{JDPEW}) Based on the ITS Approach} \label{app:reform}

\subsection{Reformulation of Step 1}
Let 
\begin{equation}
   t_j=\frac{1}{u_{ 0j}+\sum_{i \in \mathcal{N}}\left[\hat{V}_{i j}-\beta_j \hat{P}_{i j}\left(\sum_{h \in \mathcal{L}} z_{i h}^{*} d_h\right)\right] x_{i j}}\notag
\end{equation}
and 
\begin{equation}
   w_j=u_{ 0j}+\sum_{i \in \mathcal{N}}\left[\hat{V}_{i j}-\beta_j \hat{P}_{i j}\left(\sum_{h \in \mathcal{L}} z_{i h}^{*} d_h\right)\right] x_{i j}.\notag
\end{equation}

Then, (\texttt{JDPEW}) can be reformulated as  (\texttt{JDPEW'}):
\begin{footnotesize}
\begin{align*}
  \min \quad &\sum_{j \in \mathcal{M}} \lambda_j u_{0 j} \hat{R}_j t_j+\sum_{i \in \mathcal{N}} \theta y_i 
  +\sum_{j \in \mathcal{M}} \lambda_j \sum_{i \in \mathcal{N}} \hat{V}_{i j} \left[\hat{R}_j x_{i j} - \hat{P}_{i j}\left(\sum_{h \in \mathcal{L}} z_{i h}^{*} x_{i j} d_h\right) + \hat{C}_{i j}x_{ij}\right] t_j \notag\\
  & \quad + \sum_{j \in \mathcal{M}} \lambda_j \sum_{i \in \mathcal{N}} \beta_j \hat{P}_{i j} \left[-\hat{R}_j \left(\sum_{h \in \mathcal{L}} z_{i h}^{*} x_{i j} d_h\right) + \hat{P}_{i j}\left(\sum_{h \in \mathcal{L}} z_{i h}^{*} x_{i j} d_h^2\right) - \left(\sum_{h \in \mathcal{L}} z_{i h}^{*} x_{i j} d_h\right) \hat{C}_{i j}\right] t_j 
\end{align*}
\end{footnotesize}
\vspace{-2.5em}
\begin{align*}
\text { s.t }  \quad &x_{i j} \leq y_i \hspace{18em}   \forall i \in \mathcal{N},~j \in \mathcal{M} \hspace{6em}\\
\quad &\sum_{i \in \mathcal{N}} \mathbb{I}_{i k} x_{i j} \geq 1 \hspace{15.5em} \forall k \in \mathcal{W},~j \in \mathcal{M} \hspace{6em} \\
\quad &y_i \in\{0,1\}, x_{i j} \in\{0,1\} \hspace{12em} \forall i \in \mathcal{N},~j \in \mathcal{M}.\hspace{6em}
\end{align*}

Let $g_{i j}=t_j x_{i j}$. Similarly, we can replace this equality with constraints (\ref{appdex:start})-(\ref{appdex:end}):
\begin{align}
& g_{i j} \geq t_j+\left(x_{i j}-1\right) t_j^U\quad &&  \forall i \in \mathcal{N},~j \in \mathcal{M} \label{appdex:start}\\
& g_{i j} \geq x_{i j} t_j^L\quad &&  \forall i \in \mathcal{N},~j \in \mathcal{M}  \\
& g_{i j} \leq t_j+\left(x_{i j}-1\right) t_j^L \quad &&  \forall i \in \mathcal{N},~j \in \mathcal{M}\\
& g_{i j} \leq x_{i j} t_j^U\quad &&  \forall i \in \mathcal{N},~j \in \mathcal{M},  \label{appdex:end}
\end{align}
where $t_j^L=1/(u_{ 0j}+\sum_{i \in \mathcal{N}} \hat{V}_{i j})$ and $t_j^U=1/u_{ 0j}$. 

Step 1 can finally be transformed into the following model that can be solved directly with commercial solvers:
\begin{footnotesize}
\begin{align*}
  \min \quad& \sum_{j \in \mathcal{M}} \lambda_j u_{0 j} \hat{R}_j t_j+\sum_{i \in \mathcal{N}} \theta y_i 
  +\sum_{j \in \mathcal{M}} \lambda_j \sum_{i \in \mathcal{N}} \hat{V}_{i j} \left[\hat{R}_j g_{i j} - \hat{P}_{i j}\left(\sum_{h \in \mathcal{L}} z_{i h}^{*} g_{i j} d_h\right) + \hat{C}_{i j}g_{ij}\right] \\
  & \quad + \sum_{j \in \mathcal{M}} \lambda_j \sum_{i \in \mathcal{N}} \beta_j \hat{P}_{i j} \left[-\hat{R}_j \left(\sum_{h \in \mathcal{L}} z_{i h}^{*} g_{i j} d_h\right) + \hat{P}_{i j}\left(\sum_{h \in \mathcal{L}} z_{i h}^{*} g_{i j} d_h^2\right) - \left(\sum_{h \in \mathcal{L}} z_{i h}^{*} g_{i j} d_h\right) \hat{C}_{i j}\right] 
\end{align*}
\end{footnotesize}
\vspace{-2.0em}
\begin{align*}
\text { s.t } \quad &x_{i j} \leq y_i &&  \forall i \in \mathcal{N},~j \in \mathcal{M} \qquad\qquad\qquad\qquad \\
&\sum_{i \in \mathcal{N}} \mathbb{I}_{i k} x_{i j} \geq 1 && \forall k \in \mathcal{W},~j \in \mathcal{M} \\
& t_j w_j\geq1\quad &&  \forall j \in \mathcal{M}\\
& w_j=u_{0 j}+\sum_{i \in \mathcal{N}}\left[\hat{V}_{i j} x_{i j}-\beta_j \hat{P}_{i j}\left(\sum_{h \in \mathcal{L}} z_{i h}^{*} x_{i j} d_h\right)\right] &&  \forall j \in \mathcal{M} \\
& g_{i j} \geq t_j+\left(x_{i j}-1\right) t_j^U\quad &&  \forall i \in \mathcal{N},~j \in \mathcal{M} \\
& g_{i j} \geq x_{i j} t_j^L\quad &&  \forall i \in \mathcal{N},~j \in \mathcal{M}  \\
& g_{i j} \leq t_j+\left(x_{i j}-1\right) t_j^L \quad &&  \forall i \in \mathcal{N},~j \in \mathcal{M}\\
& g_{i j} \leq x_{i j} t_j^U\quad &&  \forall i \in \mathcal{N},~j \in \mathcal{M}\\
&y_i \in\{0,1\},\, x_{i j} \in\{0,1\},\, t_j \geq 0,\, w_j \geq 0 && \forall i \in \mathcal{N},~j \in \mathcal{M}. \\
\end{align*}
\vspace{-4em}

\subsection{Reformulation of Step 2}
Let 
\begin{equation}
   t_j=\frac{1}{u_{ 0j}+\sum_{i \in \mathcal{N}}\left[\hat{V}_{i j}-\beta_j \hat{P}_{i j}\left(\sum_{h \in \mathcal{L}} z_{i h} d_h\right)\right] x_{i j}^{*}}\notag
\end{equation}
and 
\begin{equation}
   w_j=u_{ 0j}+\sum_{i \in \mathcal{N}}\left[\hat{V}_{i j}-\beta_j \hat{P}_{i j}\left(\sum_{h \in \mathcal{L}} z_{i h} d_h\right)\right] x_{i j}^{*}.\notag
\end{equation}
Then, model (\texttt{JDPEW}) can be reformulated as  (\texttt{JDPEW''}):
\begin{footnotesize}
\begin{align}
  \min \quad&\sum_{j \in \mathcal{M}} \lambda_j u_{ 0j} \hat{R}_j t_j+\sum_{i \in \mathcal{N}} \theta y_i^{*} 
  +\sum_{j \in \mathcal{M}} \lambda_j \sum_{i \in \mathcal{N}} \hat{V}_{i j} \left[\hat{R}_j x_{i j}^{*} - \hat{P}_{i j}\left(\sum_{h \in \mathcal{L}} z_{i h} x_{i j}^{*} d_h\right) + \hat{C}_{i j}x_{ij}^{*}\right] t_j \notag\\
  & \quad + \sum_{j \in \mathcal{M}} \lambda_j \sum_{i \in \mathcal{N}} \beta_j \hat{P}_{i j} \left[-\hat{R}_j \left(\sum_{h \in \mathcal{L}} z_{i h} x_{i j}^{*} d_h\right) + \hat{P}_{i j}\left(\sum_{h \in \mathcal{L}} z_{i h} x_{i j}^{*} d_h^2\right) - \left(\sum_{h \in \mathcal{L}} z_{i h} x_{i j}^{*} d_h\right) \hat{C}_{i j}\right] t_j \notag
\end{align}
\end{footnotesize}
\vspace{-2.0em}
\begin{align}
  \quad\quad\text { s.t } \quad & \sum_{h \in \mathcal{L}} z_{i h}=1 \hspace{19em} \forall i \in \mathcal{N} \hspace{4em}\notag \\
  \quad &\left(\sum_{h \in \mathcal{L}} z_{i h} d_h-\sum_{h^{\prime} \in L} z_{i^{\prime} h^{\prime}} d_{h^{\prime}}\right)\left(\sum_{k \in \mathcal{W}} \mathbb{I}_{i k}-\sum_{k \in \mathcal{W}} \mathbb{I}_{i^{\prime} k}\right) \leq 0 \hspace{1em}  \forall i,~i^{\prime} \in \mathcal{N} \hspace{4em} \notag \\
  \quad & p_{ij}=\left(\sum_{k\in \mathcal{W}}\mathbb{I}_{ik}P_{kj}^0\right)\left(\sum_{h\in \mathcal{L}}z_{ih}d_{h}\right)\hspace{9.3em}  \forall i \in \mathcal{N},~j \in \mathcal{M} \notag\\
  \quad &z_{ih} \in\{0,1\}  \hspace{19 em}  \forall i \in \mathcal{N},~h \in \mathcal{L}. \hspace{7em}\notag
\end{align}
Let $g_{ij h}=t_j z_{i h}$. Similarly, we can replace this equality with constraints (\ref{appendix:start1})-(\ref{appendix:end1}):
\begin{align}
& g_{ij h} \geq t_j+\left(z_{i h}-1\right) t_j^U\quad &&  \forall i \in \mathcal{N},~j \in \mathcal{M}\label{appendix:start1} \\
& g_{i jh} \geq z_{i h} t_j^L\quad &&  \forall i \in \mathcal{N},~j \in \mathcal{M}  \\
& g_{i jh} \leq t_j+\left(z_{i h}-1\right) t_j^L \quad &&  \forall i \in \mathcal{N},~j \in \mathcal{M}\\
& g_{i jh} \leq z_{i h} t_j^U\quad &&  \forall i \in \mathcal{N},~j \in \mathcal{M},  \label{appendix:end1}
\end{align}
where $t_j^L=1/(u_{ 0j}+\sum_{i \in \mathcal{N}} \hat{V}_{i j})$ and $t_j^U=1/u_{ 0j}$. 

Finally, Step 2 can be transformed into the following model that can be solved directly with commercial solvers:
\begin{footnotesize}
\begin{align*}
  \min \quad& \sum_{j \in \mathcal{M}} \lambda_j u_{ 0j} \hat{R}_j t_j+\sum_{i \in \mathcal{N}} \theta y_i^{*} 
  +\sum_{j \in \mathcal{M}} \lambda_j \sum_{i \in \mathcal{N}} \hat{V}_{i j} \left[\hat{R}_j x_{i j}^{*}t_j - \hat{P}_{i j}\left(\sum_{h \in \mathcal{L}} g_{i jh} x_{i j}^{*} d_h\right) + \hat{C}_{i j}x_{ij}^{*}t_{j}\right] +\qquad\\
  &\sum_{j \in \mathcal{M}} \lambda_j \sum_{i \in \mathcal{N}} \beta_j \hat{P}_{i j} \left[-\hat{R}_j \left(\sum_{h \in \mathcal{L}} g_{ij h} x_{i j}^{*} d_h\right) + \hat{P}_{i j}\left(\sum_{h \in \mathcal{L}} g_{ij h} x_{i j}^{*} d_h^2\right) - \left(\sum_{h \in \mathcal{L}} g_{ij h} x_{i j}^{*} d_h\right) \hat{C}_{i j}\right]
\end{align*}
\end{footnotesize}
\vspace{-2.0em}
\begin{align*}
\text { s.t } \quad & \sum_{h \in \mathcal{L}} z_{i h}=1 &&  \forall 
 i \in \mathcal{N} \\
&\left(\sum_{h \in \mathcal{L}} z_{i h} d_h-\sum_{h^{\prime} \in L} z_{i^{\prime} h^{\prime}} d_{h^{\prime}}\right)\left(\sum_{k \in \mathcal{W}} \mathbb{I}_{i k}-\sum_{k \in \mathcal{W}} \mathbb{I}_{i^{\prime} k}\right) \leq 0 &&  \forall i \in \mathcal{N} , i^{\prime} \in N \\
& p_{ij}=\left(\sum_{k\in \mathcal{W}}\mathbb{I}_{ik}P_{kj}^0\right)\left(\sum_{h\in \mathcal{L}}z_{ih}d_{h}\right)&&  \forall i \in \mathcal{N},~j \in \mathcal{M} \\
& t_j w_j\geq1\quad &&  \forall j \in \mathcal{M}\\
& w_j=u_{ 0j}+\sum_{i \in \mathcal{N}}\left[\hat{V}_{i j} x_{i j}^{*}-\beta_j \hat{P}_{i j}\left(\sum_{h \in \mathcal{L}} z_{i h} x_{i j}^{*} d_h\right)\right] &&  \forall j \in \mathcal{M} \\
& g_{ij h} \geq t_j+\left(z_{i h}-1\right) t_j^U\quad &&  \forall i \in \mathcal{N} ,j \in \mathcal{M} , h \in \mathcal{L}\\
& g_{i jh} \geq z_{i h} t_j^L\quad &&  \forall i \in \mathcal{N} ,j \in \mathcal{M} , h \in \mathcal{L} \\
& g_{i jh} \leq t_j+\left(z_{i h}-1\right) t_j^L \quad &&  \forall i \in \mathcal{N} ,j \in \mathcal{M}, h \in \mathcal{L}\\
& g_{i jh} \leq z_{i h} t_j^U\quad &&  \forall i \in \mathcal{N} ,j \in \mathcal{M} , h \in \mathcal{L}\\
&z_{ih} \in\{0,1\} &&  \forall i \in \mathcal{N} ,h \in \mathcal{L}. \\
\end{align*}

\vspace{-4em}

\section{Benchmark Models} \label{app:benchmark}

\subsection{Benchmark 1: Consistent EW Design}
Let $z_{ih}^{0}$ denote the initial value of $z_{ih}$, where $z_{ih}^{0}$ is calculated as follows:
\[\begin{aligned}
& z_{i h}^{0}= \begin{cases}1 & \text { if } h=\operatorname{argmax}_{k\in W}\{k:\mathbb{I}_{ik}=1\}, \\
0 & \text { otherwise, }
\end{cases} \\
\end{aligned}\]
 for all $i\in \mathcal{N}$ and $h\in \mathcal{L}$. Then, the corresponding price is set to $p_{ij}^0=(\sum_{k\in \mathcal{W}}\mathbb{I}_{ik}P_{kj}^0)(\sum_{h\in \mathcal{L}}z_{ih}^0 d_{h})$.

The model of consistent warranty design for all segments is as follows:
\begin{align*}
\max \quad& \sum_{j \in \mathcal{M}} \lambda_j q_{i j}\left(\mathbf{p}_{j}^{0}, \mathbf{x}\right) \cdot \sum_{i \in \mathcal{N}}\left[\left(\sum_{k \in \mathcal{W}} \mathbb{I}_{i k} P_{k j}\right)\left(\sum_{h \in \mathcal{L}} z_{i h}^{0} d_h\right)-\sum_{k \in \mathcal{W}} \mathbb{I}_{i k} f_{k j} c_{kj}\right]-\sum_{i \in \mathcal{N}} \theta x_i 
\end{align*}
\vspace{-2.0em}
\begin{align*}
\text { s.t. } \quad& \sum_{i \in \mathcal{N}} \mathbb{I}_{i k} x_i \geq 1   &&\forall k \in \mathcal{W}\quad\quad   \\
& x_i \in\{0,1\}  &&\forall i \in \mathcal{N}, 
\end{align*}
where $q_{i j}(\mathbf{p}_{j}^{0}, \mathbf{x})$ is group-$j$ customers' choice probability of EW contract $i$. The structure of the function remains similar to what is described in Equation~\eqref{choice prob}, with $p_{ij}$ replaced by $p_{ij}^{0}$. 

Through a series of transformations of the above model, the final model to be solved after reformulation is as follows:
\begin{align*}
  \min \quad &\sum_{j \in \mathcal{M}} \lambda_j u_{ 0j} \hat{R}_j t_j+\sum_{i \in \mathcal{N}} \theta x_i +\sum_{j \in \mathcal{M}} \lambda_j \sum_{i \in \mathcal{N}} \hat{V}_{i j} \Bigg[\hat{R}_j g_{ij} - \hat{P}_{i j}\left(\sum_{h \in \mathcal{L}}z_{ih}^{0} g_{i j} d_h\right) + \hat{C}_{i j}g_{ij} \Bigg]\\
  & + \sum_{j \in \mathcal{M}} \lambda_j \sum_{i \in \mathcal{N}} \beta_j \hat{P}_{i j} \Bigg[-\hat{R}_j \left(\sum_{h \in \mathcal{L}}z_{ih}^{0} g_{i j} d_h\right) + \hat{P}_{i j}\left(\sum_{h \in \mathcal{L}} z_{ih}^{0}g_{i j} d_h^2\right) - \left(\sum_{h \in \mathcal{L}} z_{ih}^{0}g_{i j} d_h\right) \hat{C}_{i j} \Bigg]
\end{align*}
\vspace{-2.0em}
\begin{align*}
\text{s.t.}\quad &\sum_{i \in \mathcal{N}} \mathbb{I}_{i k} x_i \geq 1 &&  \forall  k \in \mathcal{W} \\
&t_j w_j \geq 1 &&  \forall j \in \mathcal{M} \\
&w_j=u_{0j}+\sum_{i \in \mathcal{N}}\left[\hat{V}_{i j} x_i-\beta_j \hat{P}_{i j}\left(\sum_{h \in \mathcal{L}} z_{i h}^{0}x_{i} d_h\right)\right] &&  \forall j \in \mathcal{M} \\
&g_{i j} \geq t_j+\left(x_i-1\right) t_j^U &&  \forall i \in \mathcal{N},j \in \mathcal{M} \\
&g_{i j} \geq x_i t_j^L &&  \forall i \in \mathcal{N}, j \in \mathcal{M} \\
&g_{i j} \leq t_j+\left(x_i-1\right) t_j^L &&  \forall i \in \mathcal{N}, j \in \mathcal{M} \\
&g_{i j} \leq x_i t_j^U &&  \forall i \in \mathcal{N}, j \in \mathcal{M}\\
&x_i \in\{0,1\},\,t_j \geq 0,\, w_j \geq 0 &&  \forall i \in \mathcal{N},j \in \mathcal{M}. \\
\end{align*}

\subsection{Benchmark 2: Personalized EW Design}
This model is the same as Step 1 in Section~\ref{Subsec:twostage}:
\begin{align*}
\max \quad&\sum_{j \in \mathcal{M}} \lambda_j  \sum_{i \in \mathcal{N}}q_{i j}\left(\mathbf{p}_{j}^{0}, \mathbf{x}_j\right)\left[\left(\sum_{k \in \mathcal{W}} \mathbb{I}_{i k} P_{k j}\right)\left(\sum_{h \in \mathcal{L}} z_{i h}^{0} d_h\right)-\sum_{k \in \mathcal{W}} \mathbb{I}_{i k} f_{k j} c_{k j}\right]-\sum_{i \in \mathcal{N}} \theta y_i \\
\text { s.t. } \quad &x_{i j} \leq y_i \hspace{18em}  \forall i \in \mathcal{N} ,j \in \mathcal{M} \\
 &\sum_{i \in \mathcal{N}} \mathbb{I}_{i k} x_{i j} \geq 1 \hspace{15.5em} \forall k \in \mathcal{W} ,j \in \mathcal{M} \\
 &y_i \in\{0,1\},\, x_{i j} \in\{0,1\} \hspace{12em} \forall i \in \mathcal{N} ,j \in \mathcal{M}. 
\end{align*}

Through a series of transformation of the above model, the final model to be solved after reformulation is as follows:
\begin{small}
\begin{align*}
    \min \quad& \sum_{j \in \mathcal{M}} \lambda_j u_{ 0j} \hat{R}_j t_j+\sum_{i \in \mathcal{N}} \theta y_i 
    +\sum_{j \in \mathcal{M}} \lambda_j \sum_{i \in \mathcal{N}} \hat{V}_{i j} \left[\hat{R}_j g_{i j} - hat{P}_{i j}\left(\sum_{h \in \mathcal{L}} z_{i h}^{*} g_{i j} d_h\right) + \hat{C}_{i j}g_{ij}\right] \\
    &\quad + \sum_{j \in \mathcal{M}} \lambda_j \sum_{i \in \mathcal{N}} \beta_j \hat{P}_{i j} \left[-\hat{R}_j \left(\sum_{h \in \mathcal{L}} z_{i h}^{*} g_{i j} d_h\right) + \hat{P}_{i j}\left(\sum_{h \in \mathcal{L}} z_{i h}^{*} g_{i j} d_h^2\right) - \left(\sum_{h \in \mathcal{L}} z_{i h}^{*} g_{i j} d_h\right) \hat{C}_{i j}\right] 
\end{align*}
\end{small}
\vspace{-2.5em}
\begin{align*}
\qquad\text { s.t. } \quad &x_{i j} \leq y_i &&  \forall i \in \mathcal{N} ,j \in \mathcal{M} \qquad\qquad\qquad\qquad \\
\quad &\sum_{i \in \mathcal{N}} \mathbb{I}_{i k} x_{i j} \geq 1 && \forall k \in \mathcal{W} ,j \in \mathcal{M} \\
& t_j w_j\geq1\quad &&  \forall j \in \mathcal{M}\\
& w_j=u_{ 0j}+\sum_{i \in \mathcal{N}}\left[\hat{V}_{i j} x_{i j}-\beta_j \hat{P}_{i j}\left(\sum_{h \in \mathcal{L}} z_{i h}^{*} x_{i j} d_h\right)\right] &&  \forall j \in \mathcal{M} \\
& g_{i j} \geq t_j+\left(x_{i j}-1\right) t_j^U\quad &&  \forall i \in \mathcal{N} ,j \in \mathcal{M} \\
& g_{i j} \geq x_{i j} t_j^L\quad &&  \forall i \in \mathcal{N} ,j \in \mathcal{M}  \\
& g_{i j} \leq t_j+\left(x_{i j}-1\right) t_j^L \quad &&  \forall i \in \mathcal{N} ,j \in \mathcal{M}\\
& g_{i j} \leq x_{i j} t_j^U\quad &&  \forall i \in \mathcal{N} ,j \in \mathcal{M}\\
&y_i \in\{0,1\},\, x_{i j} \in\{0,1\},\, t_j \geq 0,\, w_j \geq 0  && \forall i \in \mathcal{N} ,j \in \mathcal{M}.  
\end{align*}

\subsection{Benchmark 3: Bundle Pricing with Consistent Design}

This is a commonly used method in reality to recommend the same contract for all different groups of products. This means that $x_{ij}=x_{ij'}$ for all $j\neq j'$. For convenience, we will denote it as $x_{i}$. Here, the decision variables are the contracts recommended for products $x_{i}$, as well as the discount prices for each contract $z_{ih}$. The optimization problem in this scenario is
\begin{align*}
    \max \quad \sum_{j \in \mathcal{M}} \lambda_j q_{i j}\left(\mathbf{p}_{j}^{0}, \mathbf{x}\right) \cdot \sum_{i \in \mathcal{N}}\left[\left(\sum_{k \in \mathcal{W}} \mathbb{I}_{i k} P_{k j}\right)\left(\sum_{h \in \mathcal{L}} z_{i h} d_h\right)-\sum_{k \in \mathcal{W}} \mathbb{I}_{i k} f_{k j} c_{k j}\right]-\sum_{i \in \mathcal{N}} \theta x_i 
\end{align*}
\vspace{-2.5em}
\begin{align*}
    \text { s.t. } \quad& \sum_{i \in \mathcal{N}} \mathbb{I}_{i k} x_i \geq 1 && \forall k \in \mathcal{W} \\
    & \sum_{h \in \mathcal{L}} z_{i h}=1 && \forall i \in \mathcal{N} \\
    & \left(\sum_{h \in \mathcal{L}} z_{i h} d_h-\sum_{h^{\prime} \in L} z_{i^{\prime} h^{\prime}} d_{h^{\prime}}\right)\left(\sum_{k \in \mathcal{W}} \mathbb{I}_{i k}-\sum_{k \in \mathcal{W}} \mathbb{I}_{i^{\prime}k}\right) \leq 0 &&\forall i \in \mathcal{N}, ~ i \in \mathcal{N} \\
    & p_{ij}=\left(\sum_{k\in \mathcal{W}}\mathbb{I}_{ik}P_{kj}^0\right)\left(\sum_{h\in \mathcal{L}}z_{ih}d_{h}\right)&&  \forall i \in \mathcal{N},~j \in \mathcal{M} \\
    & x_i \in\{0,1\},\, z_{i h} \in\{0,1\} && \forall i \in \mathcal{N},  h \in \mathcal{L}. 
\end{align*}

Through a series of transformations of the above model, the final model to be solved after reformulation is as follows:
\begin{align*}
  \min \quad& \sum_{j \in \mathcal{M}} \lambda_j u_{ 0j} \hat{R}_j t_j+\sum_{i \in \mathcal{N}} \theta x_i +\sum_{j \in \mathcal{M}} \lambda_j \sum_{i \in \mathcal{N}} \hat{V}_{i j} \Bigg[\hat{R}_j g_{ij} - \hat{P}_{i j}\left(\sum_{h \in \mathcal{L}} o_{i j h} d_h\right) + \hat{C}_{i j}g_{ij} \Bigg]\\
  & + \sum_{j \in \mathcal{M}} \lambda_j \sum_{i \in \mathcal{N}} \beta_j \hat{P}_{i j} \Bigg[-\hat{R}_j \left(\sum_{h \in \mathcal{L}} o_{i j h} d_h\right) + \hat{P}_{i j}\left(\sum_{h \in \mathcal{L}} o_{i j h} d_h^2\right)- \left(\sum_{h \in \mathcal{L}} o_{i j h} d_h\right) \hat{C}_{i j} \Bigg]
\end{align*}
\begin{align*}
\text{s.t.}\quad &\sum_{i \in \mathcal{N}} \mathbb{I}_{i k} x_i \geq 1 &&  \forall  k \in \mathcal{W} \\
&\sum_{h \in \mathcal{L}} z_{i h}=1 &&  \forall i \in \mathcal{N} \\
&\left(\sum_{h \in \mathcal{L}} z_{i h} d_h-\sum_{h^{\prime} \in L} z_{i^{\prime} h^{\prime}} d_{h^{\prime}}\right)\left(\sum_{k \in \mathcal{W}} \mathbb{I}_{i k}-\sum_{k^{\prime} \in W} \mathbb{I}_{i k^{\prime}}\right) \leq 0 &&  \forall  i, i^{\prime} \in N \\
&p_{ij}=\left(\sum_{k\in \mathcal{W}}\mathbb{I}_{ik}P_{kj}^0\right)\left(\sum_{h\in \mathcal{L}}z_{ih}d_{h}\right)&&  \forall i \in \mathcal{N},~j \in \mathcal{M} \\
&t_j w_j \geq 1 &&  \forall j \in \mathcal{M} \\
&w_j=u_{0j}+\sum_{i \in \mathcal{N}}\left[\hat{V}_{i j} x_i-\beta_j \hat{P}_{i j}\left(\sum_{h \in \mathcal{L}} s_{i h} d_h\right)\right] && \forall j \in \mathcal{M} \\
&s_{i h} \geq z_{i h}+\left(x_i-1\right) &&  \forall i \in \mathcal{N},  h \in \mathcal{L} \\
&s_{i h} \geq 0 &&  \forall i \in \mathcal{N}, h \in \mathcal{L} \\
&s_{i h} \leq z_{i h} &&  \forall i \in \mathcal{N}, h \in \mathcal{L} \\
&s_{i h} \leq x_i && \forall i \in \mathcal{N}, h \in \mathcal{L} \\
&g_{i j} \geq t_j+\left(x_i-1\right) t_j^U &&  \forall i \in \mathcal{N}, j \in \mathcal{M} \\
&g_{i j} \geq x_i t_j^L &&  \forall i \in \mathcal{N}, j \in \mathcal{M} \\
&g_{i j} \leq t_j+\left(x_i-1\right) t_j^L &&  \forall i \in \mathcal{N}, j \in \mathcal{M} \\
&g_{i j} \leq x_i t_j^U &&  \forall i \in \mathcal{N}, j \in \mathcal{M} \\
&o_{i j h} \geq s_{i h} t_j^L &&  \forall i \in \mathcal{N}, j \in \mathcal{M}, h \in \mathcal{L} \\
&o_{i j h} \geq t_j+\left(s_{i h}-1\right) t_j^U &&  \forall i \in \mathcal{N}, j \in \mathcal{M}, h \in \mathcal{L} \\
&o_{i j h} \leq t_j+\left(s_{i h}-1\right) t_j^L && \forall i \in \mathcal{N}, j \in \mathcal{M}, h \in \mathcal{L} \\
&o_{i j h} \leq s_{i h} t_j^U && \forall i \in \mathcal{N}, j \in \mathcal{M}, h \in \mathcal{L}\\
&x_i \in\{0,1\},\, z_{i h} \in\{0,1\},\, t_j \geq 0,\, w_j \geq 0 &&  \forall i \in \mathcal{N}, j \in \mathcal{M}, h \in \mathcal{L}.
\end{align*}

\section{Proofs} \label{app:proof}

\begin{proof}[Proof of Proposition~\ref{Prop1}]
To demonstrate the complexity of the original Problem (\texttt{JDPEW}), we first simplify it to a simpler problem. If the simpler problem is NP-hard, then the original problem is also NP-hard. Specifically, when the pricing decision is ignored, the original problem can be simplified to a simpler problem as follows:
\begin{align*}
\max \quad&\sum_{j \in M} \lambda_j \frac{\sum_{i \in N} u_{i j}r_{i j}  x_{i j}}{u_{ 0j}+\sum_{i \in N} u_{i j} x_{i j}}-\sum_{i \in N} \theta_i y_i \\
 \text { s.t. } \quad& x_{i j} \leq y_i \hspace{9.2em}  \forall  i \in N , j \in M \quad \text { (CAP) } \\
& x_{i j} \in\{0,1\}, y_i \in\{0,1\} \hspace{3em}  \forall  i \in N , j \in M.
\end{align*}

Then we show that Problem (\texttt{CAP}) is NP-hard even though there are only two automobile groups. Let $m=2$, $\lambda_1=\lambda_2=\frac{1}{2}$ , Problem (\texttt{CAP}) equals to:
\begin{align*}
\max \quad& \frac{1}{2} \frac{\sum_{i \in N} u_{i 1} r_{i 1} x_{i 1}}{u_{01}+\sum_{i \in N} u_{i 1} x_{i 1}}+\frac{1}{2} \frac{\sum_{i \in N} u_{i 2} r_{i 2} x_{i 2}}{u_{02}+\sum_{i \in N} u_{i 2} x_{i 2}}-\sum_{i \in N} \theta_i y_i \\
\text { s.t. } \quad & x_{i 1} \leq y_i \hspace{13em}  \forall  i \in N \\
& x_{i 2} \leq y_i \hspace{13em}  \forall  i \in N \\
& x_{i 1} \in\{0,1\}, x_{i 2} \in\{0,1\}, y_i \in\{0,1\} \hspace{1.6em}  \forall  i \in N.
\end{align*}

Then, we transform an arbitrary instance of Partition problem, which is a well-known NP-complete problem \citep[see, e.g.,][]{garey1979computers}, and is equivalent (\texttt{CAP}) problem. The Partition problem is defined as follows.

\textbf{INPUTS:} Set of items indexed by $1,2, \cdots, n$ and the size $c_i \in Z_{+}$ associated with each item $i$

\textbf{Questions:} Is there a subset $S \subseteq\{1,2, \cdots, n\}$ such $\sum_{i \in S} c_i=\sum_{i \in\{1,2, \cdots, n\} \backslash  S} c_i ?$

Let $C=\frac{1}{2} \sum_{i \in N} c_i$. Note that $\sum_{i \in S} c_i=\sum_{j \in\{1,2, \cdots, n\} \backslash S} c_i$ if and only if $\sum_{i \in S} c_i=C$. Therefore, we assume that $C \in Z_{+}$ without loss of generality, and construct an instance of the (\texttt{CAP}) problem with parameter realizations as follows:
%as follows. We set the revenues of the products and the two realizations of the mean utilities as:
\begin{align}
& r_{i 1}=1 \text { and } r_{i 2}=3 \text { for all } i \in N \notag\\
& u_{01}=1, u_{02}=2, u_{i 1}=c_i \text { and } u_{i 2}=2 c_i \text { for all } i \in N \label{specified} \\
& \theta_i=\frac{(C+1)^2}{2} c_i \text { for all } i \in N. \notag
\end{align}

First, we show that if $x_{i 1}^*, x_{i 2}^*$ and $y_i^*$ is the optimal solution of \texttt{CAP} specified as in Equation (\ref{specified}), then $x_{i 1}^*=x_{i 2}^*=y_i^*$. We will use proof by contradiction, and three cases should be considered as follows.
\begin{itemize}
    \item Case 1: We assume that there exists an $i^{\prime} \in N$, such that $y_{i^{\prime} 1}^*=1, x_{i^{\prime} 1}^*=0$ and $x_{i^{\prime} 2}^*=1$. Let $\pi^*$ represent the objective function value corresponding to the optimal solution, and let $\pi^{\prime}$ represent the objective function value when $x_{i^{\prime} 1}^*=0$ is changed to $x_{i^{\prime} 1}=1$, while the values of the other solutions remain unchanged. Let $\Delta_1$ represent the difference between the two objective function values $\pi^{\prime}$ and $\pi^*$. Then, we have:
\begin{align*}
\Delta_1 & =\pi^{\prime}-\pi^* \\
& =\frac{1}{2} \frac{\sum_{i \in N \backslash\{i^{\prime}\}} u_{i 1} x_{i 1}^*+u_{i^{\prime} 1}}{u_{01}+\sum_{i \in N \backslash\left\{i'\right\} }u_{i 1} x_{i 1}^*+u_{i^{\prime} 1}}-\frac{1}{2} \frac{\sum_{i \in N \backslash\{i'\}} u_{i 1} x_{i 1}^*}{u_{01}+\sum_{i \in N \backslash\{i^{\prime}\}} u_{i 1} x_{i 1}^*} \\
& =\frac{1}{2} \frac{u_{i^{\prime} 1} u_{01}}{\left(u_{01}+\sum_{i \in N \backslash\{i^{\prime}\}} u_{i 1} x_{i 1}^*+u_{i^{\prime} 1}\right)\left(u_{01}+\sum_{i \in N \backslash\{i^{\prime}\}} u_{i 1} x_{i 1}^*\right)} \geq 0.
\end{align*}
This contradicts the fact that $x_{i 1}^*, x_{i 2}^*$ and $y_i^*$ is the optimal solution of problem (\texttt{CAP}) specified as in Equation (\ref{specified}).
\item Case 2: We assume that there exists an $i^{\prime} \in N$, such that $y_{i^{\prime} 1}^*=1, x_{i\prime 1}^*=1$ and $x_{i^{\prime} 2}^*=0$. Similar to the proof in Case 1, if we change $x_{i^{\prime} 2}^*=0$ to $x_{i^{\prime} 2}^*=1$, we can obtain a larger objective function value.

\item Case 3: We assume that there exists $i^{\prime}, i^{\prime \prime} \in N$, such that $y_{i^{\prime} 1}^*=y_{i^{\prime\prime} 1}^*=1, x_{i^{\prime} 1}^*=0$, and $x_{i^{\prime\prime} 2}^*=0$. Let $\pi^{* *}$ represent the objective function value corresponding to the optimal solution. Let $\pi^{\prime\prime}$ represent the objective function value when $x_{i^{\prime} 1}^*=0$ is changed to $x_{i^{\prime} 1}=1$ and $x_{i^{\prime\prime} 2}^*=0$ is changed to $x_{i^{\prime\prime} 2}^*=1$, while the values of other solutions remain unchanged. Let $\Delta_2$ represent the difference between the two objective function values $\pi^{\prime\prime}$ and $\pi^{* *}$. Then, we have:
\begin{align*}
\Delta_2 & =\pi^*-\pi^{* *} \\
& =\frac{1}{2} \frac{\sum_{i \in N \backslash\{i^{\prime}\}} u_{i 1} x_{i 1}^*+u_{i^{\prime} 1}}{u_{01}+\sum_{i \in N \backslash\{i^{\prime}\}} u_{i 1} x_{i 1}^*+u_{i^{\prime} 1}}-\frac{1}{2} \frac{\sum_{i \in N \backslash \{i^{\prime}\}} u_{i 1} x_{i 1}^*}{u_{01}+\sum_{i \in N \backslash \{i^{\prime}\}} u_{i 1} x_{i 1}^*}\\
&\quad +\frac{1}{2} \frac{\sum_{i \in N \backslash\{i^{\prime\prime}\}} 3 u_{i 2} x_{i 2}^*+3 u_{i^{\prime\prime} 2}}{u_{02}+\sum_{i \in N \backslash\{i^{\prime\prime}\}} u_{i 2} x_{i 2}^*+u_{i^{\prime\prime} 2}}-\frac{1}{2} \frac{\sum_{i \in N \backslash\{i^{\prime\prime}\}} 3 u_{i 2} x_{i 2}^*}{u_{02}+\sum_{i \in N \backslash\{i^{\prime\prime}\}} u_{i 2} x_{i 2}^*} \\
& =\frac{1}{2} \frac{u_{i^{\prime} 1} u_{01}}{\left(u_{01}+\sum_{i \in N \backslash\{i^{\prime}\}} u_{i 1} x_{i 1}^*+u_{i^{\prime} 1}\right)\left(u_{01}+\sum_{i \in N \backslash\{i^{\prime}\}} u_{i 1} x_{i 1}^*\right)} \\
&\quad +\frac{1}{2} \frac{3 u_{i^{\prime\prime} 2} u_{02}}{\left(u_{02}+\sum_{i \in N \backslash\{i^{\prime\prime}\}} u_{i 2} x_{i 2}^*+u_{i^{\prime\prime} 2}\right)\left(u_{02}+\sum_{i \in N \backslash\{i^{\prime\prime}\}} u_{i 2} x_{i 2}^*\right)}\geq 0.
\end{align*}
This contradicts the fact that $x_{i 1}^*, x_{i 2}^*$ and $y_i^*$ is the optimal solution of Problem (\texttt{CAP}) specified as in (\ref{specified}).
In summary, we obtain that if $x_{i 1}^*, x_{i 2}^*$ and $y_i^*$ is the optimal solution of (\texttt{CAP}) specified as in (\ref{specified}), then $x_{i 1}^*=x_{i 2}^*=y_i^*$. Therefore, we can express the instance of (\texttt{CAP}) specified as in (\ref{specified}) as follows:
\begin{align*}
\max &\quad \frac{1}{2} \frac{\sum_{i \in N} u_{i 1} y_i}{u_{01}+\sum_{i \in N} c_i y_i}+\frac{1}{2} \frac{\sum_{i \in N} 3 u_{i 2} y_i}{u_{02}+\sum_{i \in N} 2 u_{i 2} y_i}-\sum_{i \in N} \theta_i y_i \\
\text { s.t. } &\quad y_i \in\{0,1\} \hspace{10em} \text { for all } i \in N.
\end{align*}

\end{itemize}

Next, we set the target profit as $K=(2 C^2-2 C)/(C+1)^2$, and show the Partition problem has a solution if and only if the optimal value of the instance of (\texttt{CAP}) specified as in Equation (\ref{specified}) is $K$. Let
$$
F(z):=\frac{1}{2} \frac{z}{1+z}+\frac{1}{2} \frac{6 z}{2+2 z}-\frac{2 z}{(C+1)^2}.
$$
Then, we obtain that
$$
\begin{aligned}
& \max \left\{\left.\frac{1}{2} \frac{\sum_{i \in N} u_{i 1} y_i}{u_{01}+\sum_{i \in N} u_{i 1} y_i}+\frac{1}{2} \frac{\sum_{i \in N} 3u_{i 2} y_i}{u_{02}+\sum_{i \in N} u_{i 2} y_i}-\sum_{i \in N} \theta_i y_i \right\rvert\, \begin{array}{l}
y_i \in\{0,1\} \quad \text { for all } i \in N
\end{array}\right\} \\
& =\max \left\{F(z) \left\lvert\, \begin{array}{l}
z=\sum_{i \in N} c_i y_i \\
y_i \in\{0,1\} \quad \text { for all } i \in N \text { and } j \in\{1,2\}
\end{array}\right.\right\}.
\end{aligned}
$$
The derivative of $F(z)$ is given as follows,
$$
F^{\prime}(z)=\frac{2}{(1+z)^2}-\frac{2}{(C+1)^2},
$$
which is strictly positive over $[0, \mathrm{C})$ and $(C, \infty)$. Therefore, $F(\cdot)$ has a unique maximum at $C$, that is,
$$
F(z) \leq F(C)=\frac{2 C^2-2 C}{(C+1)^2} \text { for } z \in[0, C] \cup(C, \infty).
$$
Hence, we obtain
$$
\begin{aligned}
& \max \left\{\left.\frac{1}{2} \frac{\sum_{i \in N} u_{i 1} y_i}{u_{01}+\sum_{i \in N} u_{i 1} y_i}+\frac{1}{2} \frac{\sum_{i \in N} 3 u_{i 2} y_i}{u_{02}+\sum_{i \in N} u_{i 2} y_i}-\sum_{i \in N} \theta_i y_i \right\rvert\, y_i \in\{0,1\} \quad \text { for all } i \in N\right\} \\
& =\max \left\{F(z) \left\lvert\, \begin{array}{l}
z=\sum_{i \in N} c_i y_i \\
y_i \in\{0,1\} \quad \text { for all } i \in N \text { and } j \in\{1,2\}
\end{array}\right.\right\} \leq F(C)=K.
\end{aligned}
$$

In other words, there exists assortment $S=\left\{i \mid y_i=1, i \in N\right\}$ whose objective value is K if and only if the inequality holds as equality, where the latter is equivalent to that there exists a subset $S$ of $\{1,2, \cdots, n\}$ such that $\sum_{i \in S} c_i=C$. This completes the proof.

\end{proof}

\begin{proof}[Proof of Proposition~\ref{prop2}]
     This is obvious based on the transformation process in Section~\ref{Subsec:socp}.
\end{proof}

 \begin{proof}[Proof Proposition~\ref{prop3}]
Due to the finite number of dimensions and feasible values for each decision variable, the value of the objective function $g(\mathbf{X},\mathbf{y},\mathbf{P},\mathbf{Z})$ can only take finite many different values. 

According to Algorithm~\ref{alg:algorithm1}, as each iteration in Step 1 and Step 2 can monotonically increase the objective function, then it must become constant based on a bounded integer set of decision variables, as their potential feasible values are finite. 
 \end{proof}

 \begin{proof}[Proof of Proposition~\ref{prop4}]
  According to the equivalent reformulation for the sub-optimization problem in Step 1 and Step 2 in Appendix~\ref{app:reform}, the original optimization problem can be transformed into a model with existing solvers, and the optimal solution can be guaranteed accordingly. 
 Then it is always true that the optimization problem in Step 1 and Step 2 will achieve the optimal solution within each iteration.
 \end{proof}

\section{Additional Numerical Analyses} \label{MoreNum}

\subsection{Solution Illustration}

\begin{figure}[ht!]
    \centering
    \includegraphics[width=0.98\linewidth]{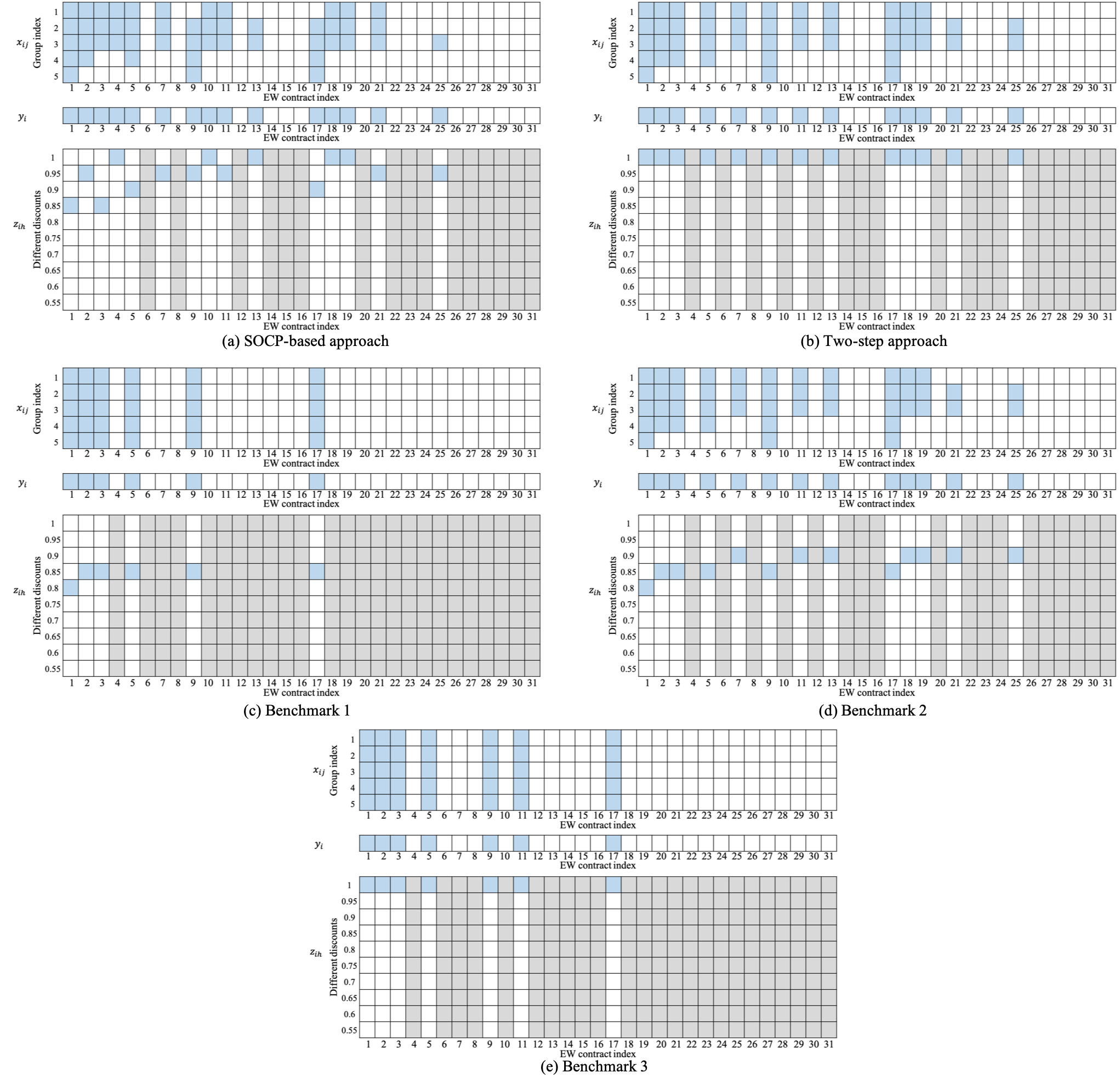}
    \caption{\centering Illustration of solutions based on different approaches}
    \label{fig:Solution}
\end{figure}

In this Section, we present the corresponding solutions based on the proposed two approaches (i.e., MISOCP reformulation and ITS approaches) and three benchmarks (i.e., \texttt{BM-1}, \texttt{BM-2} and \texttt{BM-3}) in Figure~\ref{fig:Solution}. 
The horizontal axes represent the index of EW contracts, and the vertical axes present the customer groups, advertising list, and price discount ladders for $\{x_{ij}\}$, $\{y_i\}$, and $\{z_{ih}\}$, respectively. The EW contract offer to customers and the corresponding discount selected are colored blue. Contracts that are not recommended to customers do not require pricing and are colored gray.

\subsection{Analysis on Promotion Cost}

In our optimization model in Equation (\ref{mainObject}), we still focus on the current state of third-party EW development, which is still in the early exploratory stage in practice. 
Most third-party EW providers rely primarily  on in-person sales representatives stationed at 4S dealerships to promote their warranty service, which results in substantial promotional costs.
However, with the advent of the e-commerce era for EW services, it has become significantly easier to present comprehensive information about EW contracts online, thereby reducing promotional costs, i.e., the value of $\theta$ will approach 0. 
Therefore, in this section, we evaluate the performance of our proposed approach relative to three benchmarks as the promotional cost gradually decreases, potentially approaching zero.

As shown in Table \ref{tab:PromCost}, the advent of the e-commerce epoch leads to a continuous reduction in promotional costs, thereby generating higher profits for EW providers. Meanwhile, the proposed joint optimization approach consistently delivers the best performance. Compared to scenarios with larger values of $\theta$ in early stage, the advantage of our method compared to approaches that adopt a consistent design across all automobile price bands (\texttt{BM-1} and \texttt{BM-3}) becomes more significant with the advent of e-commerce epoch. This implies that as promotional costs gradually decrease, personalized services will become more desirable.

% Table generated by Excel2LaTeX from sheet 'table'
\begin{table}[H]
  \centering
  \fontsize{8}{6}\selectfont
  \caption{Effect of promotion cost on total profit}
   {\renewcommand{\arraystretch}{1.5} 
   \begin{tabularx}{\textwidth}{>{\centering\arraybackslash}X p{0.5cm} >{\centering\arraybackslash}X>{\centering\arraybackslash}X>{\centering\arraybackslash}X>{\centering\arraybackslash}X p{0.3cm} >{\centering\arraybackslash}X>{\centering\arraybackslash}X>{\centering\arraybackslash}X}
    \toprule
    \multicolumn{1}{c}{\multirow{2}[4]{*}{$w$}} & \multirow{2}[4]{*}{$\theta$} & \multicolumn{4}{c}{Profit} &       & \multicolumn{3}{c}{Benefit (in \%)} \\
\cmidrule{3-6}\cmidrule{8-10}          &       & \texttt{Joint} & \texttt{BM-1} & \texttt{BM-2} & \texttt{BM-3} &       & \texttt{BM-1} & \texttt{BM-2} & \texttt{BM-3} \\
    \midrule
    3     & 0     & 1467.27  & 1238.47  & 1269.05  & 1407.58  &       & 18.47  & 15.62  & 4.24  \\
          & 2     & 1455.06  & 1230.98  & 1256.56  & 1400.63  &       & 18.20  & 15.80  & 3.89  \\
          & 4     & 1446.25  & 1223.59  & 1247.49  & 1393.76  &       & 18.20  & 15.93  & 3.77  \\
          & 6     & 1438.15  & 1216.31  & 1239.36  & 1387.01  &       & 18.24  & 16.04  & 3.69  \\
          & 8     & 1430.15  & 1209.10  & 1231.36  & 1380.37  &       & 18.28  & 16.14  & 3.61  \\
    4     & 0     & 2443.37  & 1902.91  & 1968.39  & 2338.54  &       & 28.40  & 24.13  & 4.48  \\
          & 2     & 2419.89  & 1892.64  & 1946.46  & 2328.20  &       & 27.86  & 24.32  & 3.94  \\
          & 4     & 2396.76  & 1882.71  & 1926.33  & 2319.67  &       & 27.30  & 24.42  & 3.32  \\
          & 6     & 2381.53  & 1872.98  & 1909.18  & 2310.15  &       & 27.15  & 24.74  & 3.09  \\
          & 8     & 2367.15  & 1863.45  & 1894.98  & 2301.87  &       & 27.03  & 24.92  & 2.84  \\
    5     & 0     & 3564.12  & 2573.75  & 2676.57  & 3351.72  &       & 38.48  & 33.16  & 6.34  \\
          & 2     & 3557.05  & 2554.29  & 2639.97  & 3355.50  &       & 39.26  & 34.74  & 6.01  \\
          & 4     & 3520.99  & 2536.90  & 2608.91  & 3325.47  &       & 38.79  & 34.96  & 5.88  \\
          & 6     & 3494.21  & 2521.17  & 2580.57  & 3315.65  &       & 38.59  & 35.40  & 5.39  \\
          & 8     & 3466.81  & 2506.47  & 2556.58  & 3300.24  &       & 38.31  & 35.60  & 5.05  \\
    \bottomrule
    %\parbox{\textwidth}{
    %\fontsize{8}{8}\selectfont
    %\textbf{Note:} Benefit(\%) = [Profit(Joint)$-$Profit(BM)] / Profit(BM) $\times$ 100\%.}
    \end{tabularx}}%
    
  \label{tab:PromCost}%
\end{table}%

\vspace{-2em}

\subsection{Robustness Check}\label{Sec:Rob}

\subsubsection{Robustness Check on Customer's Preference}

In this subsection, we test the robustness of our proposed methods based on different distributions of customers' attraction value ($v_{kj}$) on each subsystem and the formulation of preference weight of a EW contract ($u_{ij}$), respectively, as both of them directly influence customers' purchase probabilities as well as EW providers' profits. 

For the distribution of $v_{kj}$, besides uniform distribution we applied in Section \ref{Sec:Numerial}, we also discuss the profits and their gaps based on different optimization models when the value of $v_{kj}$ follows other distributions such as Power law and Normal distributions.

Table~\ref{tab:Sensi-dist} presents the results based on different distributions of attraction values. As shown, the joint optimization model proposed in our work consistently demonstrates significant advantages. Regardless of the distribution type, the profit improvement percentage compared to the other three benchmarks remains stable. This result effectively demonstrates the robustness of the proposed joint optimization approaches.

% Table generated by Excel2LaTeX from sheet 'Tables'
%\setlength{\textfloatsep}{10pt}  
%\setlength{\intextsep}{10pt}  
\begin{table}[H]
  \centering
  \fontsize{8}{6}\selectfont
  \caption{Effect of the distribution of attraction values on total profits}
 {\renewcommand{\arraystretch}{1.5} % 调整行高为1.5倍
\begin{tabularx}{\textwidth}{>{\centering\arraybackslash}X p{2cm} >{\centering\arraybackslash}X>{\centering\arraybackslash}X>{\centering\arraybackslash}X>{\centering\arraybackslash}X p{0.5cm} >{\centering\arraybackslash}X>{\centering\arraybackslash}X>{\centering\arraybackslash}X}
    \toprule
    \multirow{2}[4]{*}{$w$} & \multirow{2}[4]{*}{Uti. Distri.} & \multicolumn{4}{c}{Profit} & \multirow{2}[4]{*}{} & \multicolumn{3}{c}{Benefit (in \%)} \\
\cmidrule{3-6}\cmidrule{8-10}          &       & \texttt{Joint} & \texttt{BM-1} & \texttt{BM-2} & \texttt{BM-3} &       & \texttt{BM-1} & \texttt{BM-2} & \texttt{BM-3} \\
    \midrule
    \multirow{3}[1]{*}{3} & Uniform & 2104.54  & 1768.63  & 1800.40 & 2038.90  &       & 19.14& 17.05 & 3.26 \\
          & Normal & 2106.08  & 1768.97  & 1800.82  & 2039.56  &         & 19.21 &  17.11& 3.30 \\
          & Power Law & 2355.43  &  1936.35 &  1982.21 & 2269.35  &       & 21.79 & 18.97 & 3.84 \\
    \multirow{3}[0]{*}{4} & Uniform & 3407.91  &  2650.01 &  2688.97 & 3308.23  &       & 28.71  & 26.84 & 3.02 \\
          & Normal & 3408.88  &  2650.62 &  2689.91  & 3309.28  &       & 28.71 & 26.84 & 3.01 \\
          & Power Law & 3742.63  & 2840.72  & 2886.65  & 3603.46  &       &  31.87 & 29.76& 3.89 \\
    \multirow{3}[1]{*}{5} & Uniform & 4972.06  &  3562.40 &  3611.09 & 4790.18  &       & 39.85 & 38.00 & 3.81 \\
          & Normal & 4972.55  & 3560.68 &  3610.79  & 4786.13  &       &39.95 &  38.04 & 3.99 \\
          & Power Law & 5409.43  & 3781.09 &  3830.93  & 5206.34  &       & 43.43 & 41.57 & 4.05 \\
    \bottomrule
    %\parbox{\textwidth}{
    %\fontsize{8}{8}\selectfont
    %\textbf{Note:} Benefit(\%) = [Profit(Joint)$-$Profit(BM)] / Profit(BM) $\times$ 100\%.}
    \end{tabularx}}%
  \label{tab:Sensi-dist}%
\end{table}%

In the main context of our problem, we follow the commonly adopted additive formulation in literature to evaluate the attraction value of an EW contract by adding up the corresponding attraction values of the subsystems included within it. However, in this Section, we focus on a more general setting in which the subsystem may interact (such as when functional complementarities or substitutabilities), where the additive assumption could become restrictive. Therefore, we have included discussions on extending the model of attraction values beyond the linear specification to further assess the robustness of our proposed method. Specifically, we conducted a supplemental analysis using a nonlinear formulation to describe the attraction values as follows:

\begin{equation}\label{Equ:NonUtility}
    u_{ij}=\left(1-\frac{\ln (\sum_{k \in \mathcal{W}} \mathbb{I}_{i k}) }{6}\right) \cdot \sum_{k \in \mathcal{W}} \mathbb{I}_{i k} v_{k j} - \beta_jp_{ij},
\end{equation}
for each $i\in \mathcal{N} $ and $j\in \mathcal{M}$.

Considering that the marginal attraction values may diminish as the number of subsystems increases, Equation~(\ref{Equ:NonUtility}) introduces an additional coefficient ($1 - \frac{1}{6}\ln (\sum_{k \in \mathcal{W}} \mathbb{I}_{i k})$) that related to the number of subsystems included in the EW contract and the preference weight $u_{ij}$ becomes a concave function over the number of subsystems. Based on the nonlinear preference weight formulation in Equation (\ref{Equ:NonUtility}), we present the performance of our proposed optimization approach and three benchmarks in Table \ref{tab:nonlinear1}. As shown, we find that the superiority of our joint optimization model remains significant, which is qualitatively consistent with the linear preference weight case. The robustness of our proposed approach is fully verified.

\begin{table}[H]
  \centering
  \fontsize{8}{6}\selectfont
  \caption{Performance of different approaches based on nonlinear formulations of attraction values}
  {\renewcommand{\arraystretch}{1.5} % 调整行高为1.5倍
    \begin{tabularx}{\textwidth}{>{\centering\arraybackslash}X p{1.0cm} >{\centering\arraybackslash}X>{\centering\arraybackslash}X>{\centering\arraybackslash}X>{\centering\arraybackslash}X p{0.3cm} >{\centering\arraybackslash}X>{\centering\arraybackslash}X>{\centering\arraybackslash}X}
    \toprule
    \multirow{2}[4]{*}{$w$} & \multirow{2}[4]{*}{Case} & \multicolumn{4}{c}{Profit} &       & \multicolumn{3}{c}{Benefit (in \%)} \\
\cmidrule{3-6}\cmidrule{8-10}          &       & \texttt{Joint} & \texttt{BM-1} & \texttt{BM-2} & \texttt{BM-3} &       & \texttt{BM-1} & \texttt{BM-2} & \texttt{BM-3} \\
    \midrule
    3     & Case1 & 1987.12  & 1683.19  & 1708.84  & 1930.62  &       & 18.06  & 16.28  & 2.93  \\
          & Case2 & 670.67  & 576.02  & 585.77  & 650.25  &       & 16.43  & 14.49  & 3.14  \\
          & Case3 & 1378.51  & 1189.54  & 1210.26  & 1335.86  &       & 15.89  & 13.90  & 3.19  \\
    4     & Case1 & 3198.97  & 2557.23  & 2516.21  & 3114.91  &       & 25.10  & 27.13  & 2.70  \\
          & Case2 & 1092.88  & 910.65  & 899.94  & 1069.16  &       & 20.01  & 21.44  & 2.22  \\
          & Case3 & 2261.48  & 1876.78  & 1841.55  & 2203.54  &       & 20.50  & 22.80  & 2.63  \\
    5     & Case1 & 4576.04  & 3339.69  & 3409.49  & 4050.39  &       & 37.02  & 34.21  & 12.98  \\
          & Case2 & 1474.45  & 1188.40  & 1217.69  & 1339.90  &       & 24.07  & 21.09  & 10.04  \\
          & Case3 & 3118.90  & 2407.70  & 2478.42  & 2770.41  &       & 29.54  & 25.84  & 12.58  \\
    \bottomrule
    %\parbox{\textwidth}{
    %\fontsize{8}{8}\selectfont
    %\textbf{Note:} Benefit(\%) = [Profit(Joint)$-$Profit(BM)] / Profit(BM) $\times$ 100\%.}
    \end{tabularx}}%
  \label{tab:nonlinear1}%
\end{table}%

\subsubsection{Robustness Check on Failure Probability}

In Section~\ref{Subsubsec:failure}, we have discussed diversity failure probabilities for different subsystems based on the independent assumption. As In the third-party market, EW providers tends to classify automobiles into different subsystem based on their functions. For example, the core subsystems such as engine, braking systems, as well as auxiliary subsystems such as the transmission.
Different subsystems typically perform distinct functions through the coordinated operation of the components within them.
However, common causes may also exist that lead to simultaneous failures across different subsystems in practice. Therefore, in this section, we provide a extended discussion of cross-subsystem failure correlation to further verify the performance and robustness of the proposed joint optimization approach.

%we further generate, for each $j\in J$, a five-dimensional vector of correlated subsystem failure probabilities $F_j=(F_{1j},\ldots,F_{5j})$ from a multivariate normal $N(\mu,\Sigma)$ with $\mu=(0.12,\ldots,0.12)^\top$. The covariance $\Sigma$ is equicorrelated, with variance $0.0009$ on the diagonal and covariance $0.00036$ off the diagonal (marginal s.d. $0.03$, common correlation $\rho=0.4$). To ensure feasibility, we impose componentwise bounds $[0.05,0.20]$ by discarding draws that violate them. Based on the aforementioned setting of failure parameters, we present the profits and corresponding benefit gap in Table \ref{tab:different correlations}. 
Specifically, for each group $j\in \{1,2,...,m\}$, we further generate a $w$-dimensional vector of correlated subsystem failure probabilities, $\boldsymbol{F}_j=[F_{1j},\ldots,F_{5j}]$ from a multivariate normal distribution $N(\boldsymbol{\mu}_j,\Sigma_j)$ with the equal mean as $\boldsymbol{\mu}_j=[0.08,\ldots,0.08]$. For the covariance matrix $\Sigma_j$, we set each diagonal element (i.e., the variance of failure probability for each subsystem) to $0.0009$, and each off-diagonal element (covariance between the failure rate of any two subsystems) to $0.00036$.
To ensure that the simulated failure probability aligns with realistic conditions, we truncate the generated data to retain only samples within the range of 0.05 to 0.2.
Based on the aforementioned setting of failure parameters, we present the profits and corresponding benefit gap in Table \ref{tab:different correlations}. 

% Table generated by Excel2LaTeX from sheet 'table'
\begin{table}[htbp]
  \fontsize{8}{8}\selectfont
  \centering
  \caption{Effect of subsystem failure probability correlation on profit}
     {
\renewcommand{\arraystretch}{1.5}
    \begin{tabularx}{\textwidth}{>{\centering\arraybackslash}X >{\centering\arraybackslash}X>{\centering\arraybackslash}X>{\centering\arraybackslash}X>{\centering\arraybackslash}X p{0.1cm} >{\centering\arraybackslash}X>{\centering\arraybackslash}X>{\centering\arraybackslash}X}
    \toprule
    \multirow{2}[4]{*}{$w$} & \multicolumn{4}{c}{Profit}    &       & \multicolumn{3}{c}{Benefit(in \%)} \\
\cmidrule{2-5}\cmidrule{7-9}          & Joint & BM-1  & BM-2  & BM-3  &       & BM-1  & BM-2  & BM-3 \\
    \midrule
    3     & 1566.09  & 1529.88  & 1269.76  & 1260.59  &       & 2.37  & 23.34  & 24.23  \\
    4     & 2863.92  & 2800.29  & 2176.35  & 2129.71  &       & 2.27  & 31.59  & 34.47  \\
    5     & 4119.53  & 3941.12  & 2902.65  & 2822.73  &       & 4.53  & 41.92  & 45.94  \\
    \bottomrule
    %\parbox{\textwidth}{
    %\fontsize{8}{8}\selectfont
    %\vspace{0.2cm}
    %\textbf{Note:} Benefit = [Profit(Joint)$-$Profit(BM)] / Profit(BM) $\times$ 100\%. 
    %}
     \end{tabularx}}%
  \label{tab:different correlations}%
\end{table}%

As shown, even though there may be correlations in failure probabilities across subsystems due to shared root causes of failure, it does not undermine the advantages of our method in EW design and pricing optimization.

\subsection{Comparison with Generic Algorithm}

As our proposed iterative two-step (ITS) solution approach also belongs to the heuristic family for solving large-scale integer programming, we adopt the Genetic Algorithm (GA), a widely used and effective heuristic method for large-scale integer programming, as another benchmark to evaluate the advantages of the ITS approach. The specific steps of the GA are outlined as follows.

\textbf{Step 1: Initialization}. Randomly generate $N_p$ individuals $(\mathbf{X}^*,\mathbf{y}^*,\mathbf{P}^*, \mathbf{Z}^*)$, and each individual is checked and repaired to satisfy the constraints in Problem (\texttt{JDPEW}).

\textbf{Step 2: Evaluation}. For each individual, compute the fitness value (objective value):
$$
\sum_{j \in \mathcal{M}} \lambda_j \sum_{i \in \mathcal{N}} q_{i j}\left(\mathbf{u}_j, \mathbf{x}_j\right)\left[p_{i j}-\sum_{k \in \mathcal{W}} \mathbb{I}_{i k} F_{k j} c_{k j}\right]-\sum_{i \in \mathcal{N}} \theta y_i.$$ 
Higher fitness values indicate better individuals.

\textbf{Step 3: Selection}. Tournament selection is used: randomly pick two individuals and select the better one (higher fitness value).

\textbf{Step 4: Crossover}. For each variable, randomly select elements from each parent with probability $\mathcal{P}_c$.

\textbf{Step 5: Mutation}. Randomly flip the variables ${y_{i}}$ and $x_{ij}$ with probability $\mathcal{P}_m$, reassign $z_{ih}$ so that $\sum_{h\in \mathcal{L}}z_{ih}=1$. After mutation, repair the individual to ensure all feasibility constraints are satisfied.

\textbf{Step 6: Elitism and Iteration}. Retain top $\tau$ percent of the total population ($N_p$ individuals) as elites for the next generation. Apply a repair step to ensure the each new generation of individuals remains feasible. Repeat evaluation, selection, crossover, and mutation for $N_{gen}$
generations until no improvement is observed or the maximum iteration limit is reached.

The detailed procedure of the GA implementation is provided in Algorithm 2 for direct comparison.

\begin{algorithm}[htbp]
\caption{Genetic Algorithm (GA)}
\KwIn{Coefficients $\theta$, $\{\lambda_j\}_{j\in \mathcal{M}}$, $\{\mathbb{I}_{ik}\}_{i \in \mathcal{N}, k \in \mathcal{W}}$, $\{P^{0}_{kj}\}_{k \in \mathcal{W}, j\in \mathcal{M}}$, $\{d_h\}_{h\in \mathcal{L}}$, $\{F_{kj}\}_{k\in \mathcal{W}, j\in \mathcal{M}}$, $\{c_{jk}\}_{j\in \mathcal{M},k\in \mathcal{W}}$,  $N_p$, $N_{gen}$, $\tau$, $\mathcal{P}_c$, $\mathcal{P}_m$}
\KwOut{Best feasible solution $(\mathbf{X}^*,\mathbf{y}^*,\mathbf{P}^*, \mathbf{Z}^*)$ and the corresponding objective value}
\BlankLine
\textbf{Initialization:}\\
Generate $N_p$ individuals for each element in $(\mathbf{X}, \mathbf{y}, \mathbf{P},\mathbf{Z})$ randomly; \\
each individual is checked and repaired to satisfy the constraints.
\BlankLine
\For{$g = 1$ to $N_{gen}$}{
  \textbf{Evaluation:}\\
  \For{each individual}{
    Compute $p_{ij}$ and $q_{ij}$;\\
    Evaluate objective $f =
\sum_{j \in \mathcal{M}} \lambda_j \sum_{i \in \mathcal{N}} q_{i j}\left(\mathbf{u}_j, \mathbf{x}_j\right)\left[p_{i j}-\sum_{k \in \mathcal{W}} \mathbb{I}_{i k} F_{k j} c_{k j}\right]-\sum_{i \in \mathcal{N}} \theta y_i$,\\
  }
  \textbf{Selection:} Use tournament selection to choose parents.\\
  \textbf{Crossover:} Combine parents with probability $\mathcal{P}_c$.\\
  \textbf{Mutation:} Randomly flip bits of elements of $\mathbf{X}$, $\mathbf{y}$, and $\mathbf{Z}$, i.e., $\{x_{ij}\}$, $\{y_i\}$, and $\{z_{ih}\}$, with probability $\mathcal{P}_m$, and calculate the corresponding $\{p_{ij}\}$ in $\mathbf{P}$. \\
  Each new generation of individuals is checked and repaired to satisfy the constraints.\\
  Retain top $\tau\%$ of $N_p$ elite individuals.\\
}
\end{algorithm}

Based on the aforementioned setting of GA, we compare its performance with our proposed ITS approach in Table~\ref{tab:CompareGA} related to profit and computation time. The GA is configured with generation limit $N_{gen} = 80$, population size $N_p= 60$, crossover probability $\mathcal{P}_c= 0.5$, mutation probability $\mathcal{P}_m= 0.12$, and an elite proportion $\tau=0.05$.
As shown, although the GA requires relatively less computation time, the profits it generates remain consistently and significantly lower than those achieved by our proposed method, amounting to approximately 96–98\% of the profits obtained by our approach. 
Moreover, as the problem scale increases with larger values of $w$, the profit gap between the GA and our approach expands.

The advantage of the ITS approach lies in the partition of the decision variables in the original integer programming problem into two groups (related to EW contract design and EW pricing, respectively). In each iteration, one group of variables is fixed, and the resulting subproblem is solved to obtain the optimal solution for the remaining group. This procedure is repeated until the overall solution converges. Although convergence to the optimal solution cannot be guaranteed, each subproblem solved within an iteration yields its own optimal solution, leading to higher profits.

In practical decision-making, the EW provider is not required to make real-time decisions, making a computation time of several minutes acceptable.
However, profit performance is of greater importance, and a 2–4\% loss in profit is typically unacceptable. These results highlight the managerial relevance and effectiveness of our proposed ITS solution approach.

\begin{table}[htbp]
  \centering
  \fontsize{8}{8}\selectfont
  \caption{Performance comparison with genetic algorithm}
     {
\renewcommand{\arraystretch}{1.5}
    \begin{tabularx}{\textwidth}{>{\centering\arraybackslash}X >{\centering\arraybackslash}X>{\centering\arraybackslash}X p{0.1cm} >{\centering\arraybackslash}X >{\centering\arraybackslash}X>{\centering\arraybackslash}X}
    \toprule
    \multirow{2}[4]{*}{$w$} & \multicolumn{2}{c}{\textbf{Profit}} &     & \multicolumn{2}{c}{\textbf{Time (s)}}   \\
\cmidrule{2-3}\cmidrule{5-6}          & \textbf{ITS} & \textbf{GA} &        & \textbf{ITS} & \textbf{GA} \\
    \midrule
    3      & 2106.33  & 2067.88  &         & 1.72  & 2.77  \\
    4       & 3405.35  & 3313.52  &         & 195.31  & 4.38  \\
    5      & 4908.67  & 4740.03  &         & 693.35  & 6.14  \\
    \bottomrule
    \end{tabularx}}%
  \label{tab:CompareGA}%
\end{table}%

\end{spacing}

\end{document}